\newtheorem{thm}{Theorem}[section]
\newtheorem{lem}[thm]{Lemma}
\newtheorem{cor::3.1}[thm]{Corollary}
\theoremstyle{definition}
\renewcommand\proofname{\bf Proof}
\begin{document}

\title{\bf The spanning $k$-trees, perfect matchings and spectral radius  of graphs}
\author{Dandan Fan$^a$, Sergey Goryainov$^b$, Xueyi Huang$^a$\footnote{Corresponding author.}\setcounter{footnote}{-1}\footnote{\emph{Email address:} ddfan0526@163.com (D. Fan), sergey.goryainov3@gmail.com(S. Goryainov), huangxymath@163.com (X. Huang), huiqiulin@126.com (H. Lin).}, Huiqiu Lin$^a$\\[2mm]
\small\it $^a$School of Mathematics, East China University of Science and Technology, \\
\small\it   Shanghai 200237, P.R. China\\[1mm]
\small\it  $^b$Department of Mathematics, Chelyabinsk State University, \\
\small\it   Chelyabinsk 454021, Russia}
\date{}
\maketitle
{\flushleft\large\bf Abstract}
A $k$-tree is a spanning tree in which every vertex has degree at most $k$.
In this paper, we provide a sufficient condition for the existence of a $k$-tree in a connected graph with fixed order in terms of the adjacency spectral radius and the signless Laplacian spectral radius, respectively. Also, we give a similar condition for the existence of a perfect matching in a balanced bipartite graph with fixed order and  minimum degree.

\begin{flushleft}
\textbf{Keywords:} $k$-tree; perfect matching; spectral radius.
\end{flushleft}
\textbf{AMS Classification:} 05C50

\section{Introduction}
Let $G$ be a  graph with vertex set $V(G)$ and edge set $E(G)$. For any $v\in V(G)$, let $N_{G}(v)$ ($N(v)$ for short) and $d_G(v)$ ($d(v)$ for short) denote the neighborhood and degree of $v$, respectively. The \textit{adjacency matrix} of $G$ is defined as the matrix $A(G)=(a_{u,v})_{u,v\in V(G)}$ with $a_{u,v}=1$ if $u$ and $v$ are adjacent, and $a_{u,v}=0$ otherwise. The \textit{signless Laplacian matrix} of $G$ is defined as $Q(G)=D(G)+A(G)$, where $D(G)$ is the diagonal matrix of vertex degrees of $G$. The largest eigenvalues of $A(G)$ and $Q(G)$, denoted by $\rho(G)$ and $q(G)$, are called the  \textit{ajdacency spectral radius} and  \textit{signless Laplacian spectral radius} of $G$, respectively.

A basic result in graph theory asserts that every connected graph has a spanning tree. In the past several decades, the existence of a spanning tree with a given parameter in a connected graph has been widely studied by researchers (see, for example \cite{G.Ding,L.Gargano-1,L.Gargano-2,L.Gargano-3,L.Neumann,ci3}). A $k$-tree is a spanning tree with every vertex of degree at most $k$. It is natural to ask  which graphs contain a $k$-tree. In 1989, Win \cite {S.Win} (see also \cite{M.Ellingham} for a short proof) proved the following result, which gives a sufficient condition for the existence of a $k$-tree in a connected graph.
\begin{lem}(Win \cite{S.Win})\label{lem::1.1}
Let $G$ be a connected graph. If $k\geq 2$ and for any subset $S$ of $V(G)$
$$c(G-S)\leq(k-2)|S|+2,$$
then $G$ has a  $k$-tree, where $c(G-S)$ is the number of components in $G-S$.
\end{lem}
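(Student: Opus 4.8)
The plan is to argue by contradiction using an extremal spanning tree. Suppose $G$ satisfies the hypothesis but has no $k$-tree. Among all spanning trees of $G$, I would choose $T$ minimizing the total excess $\epsilon(T)=\sum_{v\in V(G)}\max\{d_T(v)-k,\,0\}$, breaking ties by a secondary quantity such as the number of vertices of degree exactly $k$ (or $\sum_v\binom{d_T(v)}{2}$). Since $G$ has no $k$-tree, $\epsilon(T)\ge 1$, so the set $B=\{v:d_T(v)\ge k+1\}$ of \emph{bad} vertices is nonempty. The goal is to extract from $T$ a set $S$ with $c(G-S)>(k-2)|S|+2$, contradicting the hypothesis; the natural candidate is $S=B$.

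First I would count components inside the tree. As $T-B$ is a forest, an edge count gives $c(T-B)=1-|B|+\sum_{v\in B}d_T(v)-e(T[B])$, where $e(T[B])$ is the number of tree edges inside $B$. Using $d_T(v)\ge k+1$ on $B$ and $e(T[B])\le |B|-1$, this yields $c(T-B)\ge (k-1)|B|+2$, and $\ge k|B|+1$ when $B$ is independent in $T$. Since $T\subseteq G$ we have $c(G-B)\le c(T-B)$, so the entire difficulty is that edges of $G$ outside $T$ may merge components of $T-B$ and pull $c(G-B)$ down. Comparing with the target $(k-2)|B|+2$ leaves slack of order $|B|$ to $2|B|$; hence it suffices to show that these \emph{cross edges} merge the components of $T-B$ into at least $(k-2)|B|+3$ classes, i.e.\ that the number of merges is at most about $2|B|-2$.

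The engine for controlling cross edges is an exchange argument. Suppose $xy\in E(G)$ with $x,y$ in distinct components $C_i,C_j$ of $T-B$. The tree path from $x$ to $y$ meets $B$; letting $w$ be the first bad vertex on it and $a\in C_i$ its predecessor, the swap $T'=T-aw+xy$ is again a spanning tree, with degree changes $+1$ at $x,y$ and $-1$ at $a,w$. Since $d_T(w)\ge k+1$, the decrement at $w$ lowers the excess by $1$, while $a$ (non-bad) contributes nothing; if moreover $d_T(x),d_T(y)\le k-1$, then $x,y$ absorb their extra edge without creating excess, so $\epsilon(T')<\epsilon(T)$, contradicting minimality. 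Thus every cross edge must have a \emph{saturated} endpoint, one of degree exactly $k$ in $T$.

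The main obstacle is precisely these saturated endpoints: when $d_T(x)=k$, adding $xy$ makes $x$ bad, so the single beneficial decrement at $w$ only cancels this increase and the swap ties rather than strictly improves. Resolving this is the technical heart of the proof, and it is where I would invoke the secondary minimization together with a careful, edge-by-edge (discharging-style) accounting of how many components a bad vertex can force to be merged, charging each merge to a bad vertex or to an edge of $T[B]$ so as to bound the number of merges by roughly $2|B|-2$. Combined with the lower bound on $c(T-B)$ this would give $c(G-B)>(k-2)|B|+2$, the desired contradiction. I expect establishing this linear bound on the number of merges — equivalently, showing that the extremal tree cannot carry too many independent saturated cross edges between components of $T-B$ — to be the step demanding the most care.
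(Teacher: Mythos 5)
The paper itself offers no proof of this lemma---it is quoted directly from Win's 1989 paper (with Ellingham--Zha cited for a short proof)---so your attempt can only be judged on its own terms. The parts you actually prove are correct: the count $c(T-B)=1-|B|+\sum_{v\in B}d_T(v)-e(T[B])\geq (k-1)|B|+2$ is right, the swap $T'=T-aw+xy$ is a valid spanning tree, and the exchange argument does show that in an excess-minimal tree every edge of $G$ joining two components of $T-B$ has an endpoint of $T$-degree exactly $k$. But the proof stops exactly where you say it does: the bound on the number of merges (``roughly $2|B|-2$'') is never established, only described as something a ``discharging-style accounting'' and a secondary tie-break ought to deliver. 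That missing step is not a technicality; it is the entire content of the theorem, and what you have written is the easy half that appears at the start of essentially any proof of this type.

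Moreover, the specific plan you sketch---keep $S=B$ and bound the merges using only the extremality of $T$---cannot be completed. Take $k=3$ and let $G$ be the unicyclic graph with a vertex $w$ adjacent to $a_1,b_1,c,d$, where $a_1$ has two further pendant neighbours $a_2,a_3$, $b_1$ has two further pendant neighbours $b_2,b_3$, and $a_1b_1\in E(G)$. This graph has no $3$-tree: every spanning tree omits one edge of the triangle $wa_1b_1$ and thereby forces degree $4$ at $w$, $a_1$ or $b_1$. Hence every spanning tree has excess exactly $1$ and is extremal, $B$ is always a single vertex, and in every case $c(G-B)=3=(k-2)|B|+2$, so $S=B$ never witnesses the failure of the hypothesis; the minimal witness is $S=\{w,a_1,b_1\}$, which necessarily contains the two \emph{saturated} vertices. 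Your tie-breakers do not help: all three extremal trees also tie on the number of degree-$k$ vertices and on $\sum_v\binom{d_T(v)}{2}$. Since your proposed derivation of the merge bound uses only extremality of $T$ (never the toughness hypothesis applied to sets other than $B$), this example refutes it outright. Any correct proof must enlarge the candidate cut set to include vertices of $T$-degree exactly $k$ and then redo the accounting (each added vertex costs $k-2$ in the bound but, having tree-degree $k$, buys about $k-1$ new components), typically via an iterative construction or induction; that is what Win's and Ellingham--Zha's arguments actually do, and it is precisely the step absent from your proposal.
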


As usual, let $K_{n}$ denote the complete graph on $n$ vertices, and $K_{m,n}$ the complete bipartite graph with parts of size $m$ and $n$. Given two graphs $G_1$ and $G_2$, the \textit{join}  $G_{1} \nabla G_{2}$ is the graph obtained from  $G_{1}\cup G_{2}$ by adding all edges between $G_{1}$ and $G_{2}$.

Inspired by the work of Win \cite {S.Win}, it is interesting to find a spectral condition for the existence of a $k$-tree in a connected graph. Since a $2$-tree is just a  Hamilton path, the  following two theorems provide such conditions for $k=2$ in terms of the adjacency spectral radius and the signless Laplacian spectral radius, respectively.

\begin{thm}(Ning and Ge \cite{B.Ning})\label{thm::1.1}
Let $G$ be a connected graph of order $n\geq 4$. If $\rho(G)> n-3$, then $G$ contains a Hamilton path unless $G \in \{K_{1} \nabla (K_{n-3} \cup 2K_{1}),K_{2} \nabla 4 K_{1}, K_{1} \nabla  (K_{1,3} \cup K_{1})\}$.
\end{thm}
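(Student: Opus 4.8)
The plan is to argue the contrapositive: assume $G$ is connected of order $n\ge 4$, has no Hamilton path, and is none of the three listed graphs; I aim to show $\rho(G)\le n-3$. The combinatorial input is twofold. First, the elementary necessary condition for traceability: if $G$ has a Hamilton path then $c(G-S)\le |S|+1$ for every $S\subseteq V(G)$, since deleting $|S|$ vertices from a path leaves at most $|S|+1$ subpaths; equivalently, any cut with $c(G-S)\ge |S|+2$ certifies the absence of a Hamilton path. Second, specializing Lemma~\ref{lem::1.1} to $k=2$ and taking the contrapositive, the absence of a Hamilton path already produces some $S$ with $c(G-S)\ge 3$. The crucial task is to \emph{upgrade} this weak cut to one realizing the sharp deficiency $c(G-S)\ge |S|+2$. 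This step genuinely needs the spectral hypothesis: tough non-traceable graphs admitting no such cut do exist, but they are sparse, whereas $\rho(G)>n-3$ forces density. A natural route is the Bondy--Chv\'atal $(n-1)$-closure $\widehat{G}$, which is again non-traceable; the point is that a closed non-traceable graph admits a cut $S$ with $c(\widehat{G}-S)\ge |S|+2$, and since deleting $S$ from the sparser graph $G\subseteq\widehat{G}$ can only increase the number of components, the same $S$ satisfies $c(G-S)\ge |S|+2$.

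With a set $S$ of size $s:=|S|$ and $c(G-S)\ge s+2$ in hand, I would pass to the densest graph compatible with this cut. Since adding edges never decreases the spectral radius, I may assume $S$ induces a clique, $S$ is completely joined to the rest, and every component of $G-S$ is a clique; thus $G$ is a spanning subgraph of $K_s\nabla(K_{a_1}\cup\cdots\cup K_{a_c})$ with $c\ge s+2$ and $\sum_i a_i=n-s$. For fixed $s$ this family has largest spectral radius when $c=s+2$ and the sizes are as concentrated as possible, namely for
\[
H_s:=K_s\nabla\bigl(K_{\,n-2s-1}\cup (s+1)K_1\bigr),
\]
which is itself non-traceable because $c(H_s-S)=s+2>s+1$. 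Hence $\rho(G)\le\rho(H_s)$, and it suffices to control $\rho(H_s)$.

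The partition of $V(H_s)$ into the cell $S$, the clique $K_{n-2s-1}$, and the $s+1$ isolated vertices is equitable, so $\rho(H_s)$ is the largest eigenvalue of the quotient matrix
\[
\begin{pmatrix} s-1 & n-2s-1 & s+1\\ s & n-2s-2 & 0\\ s & 0 & 0\end{pmatrix}.
\]
I would evaluate its characteristic polynomial at $\rho=n-3$ and track the sign in $s$ and $n$. The expectation is that the bound peaks at $s=1$, where $H_1=K_1\nabla(K_{n-3}\cup 2K_1)$ satisfies $\rho(H_1)>n-3$ for all $n\ge 4$ (the generic exception), while $\rho(H_s)\le n-3$ for every $s\ge 2$ once $n$ is large. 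The two sporadic exceptions sit exactly where this monotonicity fails for small $n$: at $s=2,\ n=6$ the clique $K_{n-2s-1}$ shrinks to one vertex and $H_2=K_2\nabla 4K_1$, whereas $K_1\nabla(K_{1,3}\cup K_1)$ is a proper spanning subgraph of $K_2\nabla 4K_1$ that still keeps both $\rho>n-3$ and non-traceability. I would therefore treat a bounded range of small $n$ separately, checking by hand which spanning subgraphs of the relevant $H_s$ retain $\rho>n-3$.

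I expect the main obstacle to be the first step: the structural reduction from ``no Hamilton path'' to an explicit cut with $c(G-S)\ge |S|+2$. This cannot be purely combinatorial, since it is false for sparse tough graphs, so it has to be driven by the density coming from $\rho(G)>n-3$, and turning the closure heuristic into a rigorous argument is the delicate part. A secondary difficulty is the spectral bookkeeping --- comparing the largest roots of the $3\times 3$ quotient matrices across different $s$, and identifying precisely which small-order subgraphs survive the threshold, which is what singles out the two order-$6$ exceptions.
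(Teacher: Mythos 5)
Your second half --- from a deficient cut $c(G-S)\ge |S|+2$, pass to the densest completion $K_s\nabla(K_{a_1}\cup\cdots\cup K_{a_c})$, reduce to $H_s=K_s\nabla(K_{n-2s-1}\cup(s+1)K_1)$, and compare largest eigenvalues of the $3\times 3$ equitable quotient matrices across $s$ --- is sound in outline; it is essentially the same machinery this paper uses for its own Theorem \ref{thm::1.3} (Lemma \ref{lem::2.2} plus Lemma \ref{lem::3.1}), and your quotient matrix for $H_s$ and the identification of the three exceptional graphs are correct. But note that the paper does not prove Theorem \ref{thm::1.1} at all; it is quoted from Ning and Ge, so your attempt must stand on its own --- and its first step does not.

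The claim that ``a closed non-traceable graph admits a cut $S$ with $c(\widehat{G}-S)\ge |S|+2$'' is false, and this is exactly the step everything else depends on. Hypotraceable graphs (e.g., Thomassen's $34$-vertex example, which has maximum degree $3$) are counterexamples: such a graph $G$ is non-traceable; for any nonempty $S$ and any $v\in S$, the graph $G-v$ is traceable, hence $c(G-S)=c\bigl((G-v)-(S\setminus\{v\})\bigr)\le |S\setminus\{v\}|+1=|S|$, so $G$ has \emph{no} deficient cut whatsoever; and since every nonadjacent pair has degree sum at most $6<n-1$, $G$ coincides with its own $(n-1)$-closure. The deeper problem is that the closure cannot manufacture the density your argument needs: precisely in the sparse regime where the combinatorial implication fails, $\widehat{G}=G$ and taking the closure does nothing. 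The density supplied by $\rho(G)>n-3$ must therefore enter \emph{before} the structural step, not after. The route that actually works (and is how Ning--Ge argue) is to convert the spectral hypothesis into an edge count via Hong's bound (Lemma \ref{lem::2.3}): $\rho(G)>n-3$ gives $e(G)\ge\binom{n-2}{2}+2$, and one then proves a stability-type characterization of connected non-traceable graphs with at least this many edges. Your proposed ``upgrade to a deficient cut for dense graphs'' is essentially equivalent in strength to that characterization, so assuming it without proof leaves the core of the theorem unproved; once one has it, the cut and the quotient-matrix comparison are no longer needed.
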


\begin{thm}(Liu, Shiu and Xue \cite{R.Liu})\label{thm::1.2}
Let $G$ be a connected graph of order $n\geq 4$. If $q(G)\geq 2n-5$, then $G$ contains a Hamilton path unless $G\in \{K_{2}\nabla 4 K_{1}, K_{1} \nabla  (K_{2}\cup 2K_{1}), K_{1,3}, K_{1,4}\}$.
\end{thm}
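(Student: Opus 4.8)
The plan is to argue by contradiction through the edge-extremal non-traceable graphs. Suppose $G$ is connected of order $n\ge 4$, has no Hamilton path, and yet $q(G)\ge 2n-5$; the goal is to force $G$ onto the stated list. Since a $2$-tree is exactly a Hamilton path, the necessity behind Lemma~\ref{lem::1.1} applies: if $G$ had a spanning tree of maximum degree $2$, then deleting any $S$ would leave at most $|S|+1$ components, so every non-traceable $G$ is obstructed, and in the dense regime the obstruction is a vertex set $S$ with $c(G-S)\ge |S|+2$. Writing $H_{n,s}:=K_s\nabla(K_{n-2s-1}\cup(s+1)K_1)$, each such graph is non-traceable precisely when $n-2s-1\ge 1$, because deleting the $K_s$ then leaves $1+(s+1)=s+2>s+1$ components. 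The first half of the proof is the structural reduction $G\subseteq H_{n,s}$ for some $1\le s\le \lfloor (n-2)/2\rfloor$, after which monotonicity of the signless Laplacian spectral radius under edge addition yields $q(G)\le q(H_{n,s})$.

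To obtain this reduction I would pass to an edge-maximal non-traceable graph $\widehat{G}\supseteq G$ (add edges while remaining connected and non-traceable until none can be added). For $\widehat{G}$ one argues that the witnessing cut $S$ may be taken complete and completely joined to the rest, that each component of $\widehat{G}-S$ is a clique, and that consolidating all but $s+1$ trivial components into a single clique $K_{n-2s-1}$ never accidentally produces a Hamilton path: concavity of $\binom{\cdot}{2}$ makes one big clique plus exactly $s+1$ singletons the unique edge-maximal configuration keeping $c(\widehat G-S)=s+2$. This identifies $\widehat{G}$ with $H_{n,s}$, $s=|S|$, and since $G$ is a spanning subgraph, $q(G)\le q(H_{n,s})$.

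For the spectral estimate, $H_{n,s}$ has the equitable partition $(A,B,C)$ into the $K_s$ (degree $n-1$), the $K_{n-2s-1}$ (degree $n-s-2$), and the $s+1$ singletons (degree $s$), so $q(H_{n,s})$ is the Perron root of the quotient matrix
$$B_s=\begin{pmatrix} n+s-2 & n-2s-1 & s+1 \\ s & 2n-3s-4 & 0 \\ s & 0 & s \end{pmatrix}.$$
I would then show $\max_{s} q(H_{n,s})<2n-5$ once $n\ge 7$. For $s=1$ a direct evaluation of $\det\!\big((2n-5)I-B_1\big)=2(n-3)(n-5)-4>0$ for $n\ge 6$, together with a sign/derivative check, gives $q(H_{n,1})<2n-5$; the boundary members with the smallest clique $K_{n-2s-1}$ are handled by the same two-by-two/cubic analysis (e.g.\ the even case $K_{(n-2)/2}\nabla\overline{K_{(n+2)/2}}$ has Perron root $\tfrac12\big[(2n-4)+\sqrt{2n(n-2)}\big]<2n-5$ iff $n^2-10n+18>0$, i.e.\ $n\ge 8$). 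This contradicts $q(G)\ge 2n-5$, so no exception survives for $n\ge 7$. For $4\le n\le 6$ I would instead compute $q$ for the finitely many connected non-traceable graphs directly — including the non-maximal stars $K_{1,n-1}$, which sit inside some $H_{n,s}$ — and keep exactly those with $q\ge 2n-5$; this is precisely where $K_{1,3}$ (at $n=4$), $K_{1,4}$ and $K_1\nabla(K_2\cup 2K_1)$ (at $n=5$), and $K_2\nabla 4K_1$ (at $n=6$, with $q=4+2\sqrt3>7$) appear.

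The principal obstacle is the structural reduction $G\subseteq H_{n,s}$: the component condition behind Lemma~\ref{lem::1.1} is necessary but not sufficient for traceability, so the obstruction cannot be read off from non-traceability alone. The real content is to show that $q(G)\ge 2n-5$ forces $G$ to be dense enough that the only surviving obstruction is the component one (the sparse tough-but-non-traceable graphs have $q$ far below $2n-5$), after which edge-maximality pins the graph down to $H_{n,s}$; making this rigorous is most cleanly done through an extremal edge-count theorem for connected non-traceable graphs. A secondary technical point is that the Perron root of $B_s$ is not monotone in $s$ for small $n$ (at $n=6$ the maximiser is $s=2$), so the comparison with $2n-5$ must be carried out carefully across the whole admissible range $1\le s\le\lfloor(n-2)/2\rfloor$.
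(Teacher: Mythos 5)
The paper itself does not prove this statement: Theorem \ref{thm::1.2} is quoted from Liu, Shiu and Xue \cite{R.Liu} as a known result, so your proposal has to stand on its own, and it has a genuine gap at its foundation. Your entire reduction $G\subseteq H_{n,s}=K_s\nabla(K_{n-2s-1}\cup(s+1)K_1)$ rests on the claim that a non-traceable graph (or an edge-maximal non-traceable supergraph of it) admits a vertex set $S$ with $c(G-S)\geq |S|+2$. This is the converse of the necessary condition for traceability, and it is false in general: any hypotraceable graph $G$ (non-traceable, but $G-v$ traceable for every vertex $v$; such graphs exist, the smallest known being Thomassen's on $34$ vertices) satisfies $c(G-S)\leq |S|$ for every nonempty $S$, since one can pick $v\in S$, take a Hamilton path of $G-v$, and delete the remaining $|S|-1$ vertices of $S$ from that path. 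Hence no witnessing cut exists for such $G$, and since adding edges only decreases $c(\cdot-S)$, its edge-maximal non-traceable supergraphs are not of the form $H_{n,s}$ either. Note also that Lemma \ref{lem::1.1} cannot be invoked here the way the paper invokes it for $k\geq 3$: for $k=2$ its contrapositive only produces a set $S$ with $c(G-S)\geq 3$, not $c(G-S)\geq |S|+2$, and the resulting family $K_s\nabla(K_{n_1}\cup K_{n_2}\cup K_{n_3})$ contains traceable graphs whose signless Laplacian spectral radius far exceeds $2n-5$, so that reduction yields nothing. This is precisely why the $k=2$ results (Theorems \ref{thm::1.1} and \ref{thm::1.2}) require a different mechanism than the paper's proof of Theorem \ref{thm::1.3}.

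You do flag this obstacle yourself in your closing paragraph, but identifying the missing step is not the same as supplying it: as written, your argument establishes only the spectral estimates for the specific graphs $H_{n,s}$ (which, for the record, are computed correctly; the quotient matrix, the $s=1$ determinant $2(n-3)(n-5)-4$, and the Perron root of the complete split graph are all right). The standard way to close the gap, and essentially the route of the cited literature, avoids the structural reduction altogether: apply Lemma \ref{lem::2.4} to convert $q(G)\geq 2n-5$ into the edge bound $2m\geq (n-1)(n-3)$, then invoke an Erd\H{o}s--Gallai/Ore-type extremal theorem bounding the number of edges of a connected non-traceable graph together with its equality and near-equality (stability) characterization, and finish by inspecting the few near-extremal configurations, which is where the four exceptional graphs emerge. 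If you want a proof in the spirit of your proposal, you must prove that extremal edge-count theorem and its stability version; neither is routine enough to be left as a remark.
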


In this paper, for any $k\geq 3$,  we  give a spectral radius condition for the existence of a $k$-tree in a connected graph.
\begin{thm}\label{thm::1.3}
Let $k\geq 3$, and let  $G$ be a connected graph of order $n\geq 2k+16$.
\begin{enumerate}[(i)]
\item  If $\rho(G)\geq \rho(K_{1} \nabla (K_{n-k-1}\cup k K_{1}))$,
then $G$ contains a $k$-tree unless $G\cong K_{1} \nabla (K_{n-k-1}\cup kK_{1})$.
\item  If $q(G)\geq q(K_{1} \nabla (K_{n-k-1} \cup k K_{1}))$,
then $G$ contains a $k$-tree unless $G\cong K_{1} \nabla (K_{n-k-1}\cup kK_{1})$.
\end{enumerate}
\end{thm}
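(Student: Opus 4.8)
The plan is to argue by contradiction through Win's criterion (Lemma \ref{lem::1.1}). Suppose $G$ has order $n\ge 2k+16$, satisfies the spectral hypothesis, but contains no $k$-tree. By Lemma \ref{lem::1.1} there is a nonempty set $S\subseteq V(G)$ with $c(G-S)\ge (k-2)|S|+3$; write $s=|S|$ and $c=c(G-S)$. The first goal is to convert this combinatorial obstruction into a spectral upper bound. Since $G$ is a spanning subgraph of $K_s\nabla H$, where $H$ is a disjoint union of $c$ cliques on the remaining $n-s$ vertices, and adding edges strictly increases the spectral radius of a connected graph by Perron--Frobenius, it suffices to bound $\rho$ (resp. $q$) of the densest admissible graph. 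A standard concentration step shows that the maximum is attained when one clique is as large as possible and the rest are isolated: replacing two components $K_p\cup K_q$ by $K_{p+1}\cup K_{q-1}$ only adds edges inside $K_s\nabla(\cdots)$, and shrinking $c$ to the smallest value the constraint allows, namely $c=(k-2)s+3$, only trades isolated vertices for clique vertices. Hence
\[
\rho(G)\le \rho(G_s),\qquad G_s:=K_s\nabla\big(K_{\,n-(k-1)s-2}\cup((k-2)s+2)K_1\big),
\]
and likewise $q(G)\le q(G_s)$, for some admissible $s$ with $1\le s\le (n-3)/(k-1)$. Observe that $G_1=K_1\nabla(K_{n-k-1}\cup kK_1)$ is exactly the conjectured extremal graph.

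The problem thus reduces to comparing $\rho(G_s)$ (resp. $q(G_s)$) across $s$. The partition of $V(G_s)$ into $S$, the large clique, and the isolated part is equitable, so $\rho(G_s)$ is the largest root of the cubic characteristic polynomial $f_s(x)=\det(xI-B_s)$ of the adjacency quotient matrix
\[
B_s=\begin{pmatrix} s-1 & n-(k-1)s-2 & (k-2)s+2\\[1mm] s & n-(k-1)s-3 & 0\\[1mm] s & 0 & 0\end{pmatrix}.
\]
Since $f_s$ is monic and $\rho(G_s)$ is its largest root, the inequality $\rho(G_s)<\rho(G_1)$ is equivalent to $f_s(\rho(G_1))>0$ together with $\rho(G_1)$ lying to the right of all roots of $f_s$. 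The plan is therefore to establish a lower bound for $\rho(G_1)$ --- using $K_{n-k}\subseteq G_1$ one gets $\rho(G_1)>n-k-1$, which can be sharpened --- and then to verify $f_s(x)>0$ for $x\ge n-k-1$ and every admissible $s\ge 2$. This would give the strict inequality $\rho(G)\le\rho(G_s)<\rho(G_1)$ for $s\ge 2$, so that the hypothesis $\rho(G)\ge\rho(G_1)$ forces $s=1$, and equality analysis in the concentration step then forces $G\cong G_1$, the desired contradiction/classification.

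The main obstacle is precisely this sign analysis. Because every entry of $B_s$ is linear in $s$, the dependence of $f_s(x)$ on $s$ is polynomial, and I expect the key to be showing that, on the relevant range $x\ge n-k-1$, the quantity $f_s(x)$ is increasing (or at least positive) in $s$ for $s\ge 2$; this monotonicity is what ultimately isolates $s=1$ as the unique extremal case. The hypothesis $n\ge 2k+16$ is what makes these estimates hold uniformly in both $k$ and $s$, and I anticipate that checking a small number of boundary cases (e.g. $s=2$ and $s$ near its maximum value $(n-3)/(k-1)$) by hand will be unavoidable.

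For part (ii) the scheme is identical, with the adjacency quotient replaced by the signless-Laplacian quotient matrix, obtained by adding the common degrees $n-1$, $\,n-(k-2)s-3$, $\,s$ of the three cells to the diagonal:
\[
B_s^{Q}=\begin{pmatrix} n+s-2 & n-(k-1)s-2 & (k-2)s+2\\[1mm] s & 2(n-(k-1)s-2)+s-2 & 0\\[1mm] s & 0 & s\end{pmatrix}.
\]
One again lower-bounds $q(G_1)$, evaluates the characteristic polynomial of $B_s^{Q}$ at that value, and shows it is positive for all admissible $s\ge 2$, yielding $q(G)\le q(G_s)<q(G_1)$ unless $s=1$ and $G\cong G_1$. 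No minimum-degree parameter enters here, since Theorem \ref{thm::1.3} concerns spanning $k$-trees rather than the bipartite matching setting.
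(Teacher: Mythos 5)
Your reduction is the same as the paper's: Win's criterion produces $S$ with $c(G-S)\ge (k-2)s+3$, $G$ embeds as a spanning subgraph of $K_s\nabla(K_{n_1}\cup\cdots\cup K_{n_t})$, and one concentrates the clique part into $K_s\nabla(K_{n-(k-1)s-2}\cup((k-2)s+2)K_1)$. However, your justification of the concentration step is wrong: replacing $K_p\cup K_q$ by $K_{p+1}\cup K_{q-1}$ inside $K_s\nabla(\cdots)$ does \emph{not} ``only add edges'' --- the vertex that migrates loses its $q-1$ edges to its former clique-mates, so neither graph is a spanning subgraph of the other (indeed $K_s\nabla(K_{n_1}\cup\cdots\cup K_{n_t})$ is not in general a subgraph of $K_s\nabla(K_{n-s-t+1}\cup(t-1)K_1)$: the vertices of the smaller nontrivial cliques cannot all be accommodated in the single large clique). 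This comparison genuinely requires a spectral argument; the paper proves it as Lemma \ref{lem::3.1} by a Rayleigh-quotient computation with the Perron vector of $A_a(G)$, using $x_1\ge x_j$ for the clique values. Your appeal to monotonicity under edge addition (Lemma \ref{lem::2.2}) cannot close this step.

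The second, and larger, gap is that the quantitative core of the theorem --- showing $\rho(G_s)<\rho(G_1)$ and $q(G_s)<q(G_1)$ for every admissible $s$ with $2\le s\le (n-3)/(k-1)$ --- is only announced, not proved. You set up the (correct) equitable quotient matrices and state that the needed ``sign analysis'' of $f_s(x)$ on $x\ge n-k-1$ is ``the main obstacle'' that you ``expect'' and ``anticipate'' will work out; that is precisely the content of the theorem, uniform in $n$, $k$ and $s$, and it is left undone. It is worth noting how the paper avoids this difficulty entirely: instead of characteristic polynomials of quotient matrices, it bounds $\rho_0(G_s^2)\le\sqrt{2m-n+1}$ (Hong, Lemma \ref{lem::2.3}) and $\rho_1(G_s^2)\le \frac{2m}{n-1}+n-2$ (Das, Lemma \ref{lem::2.4}), so that the dependence on $s$ is a single upward-opening quadratic whose maximum over $[2,(n-3)/(k-1)]$ is checked at the endpoints; the hypothesis $n\ge 2k+16$ then gives $\rho_0(G_s^2)<n-k-1<\rho_0(G_1)$ and $\rho_1(G_s^2)<2(n-k-1)<q(G_1)$, where the lower bounds on $\rho(G_1),q(G_1)$ come from the spanning subgraph $K_{n-k}\cup kK_1$ exactly as you indicate. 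If you want to salvage your quotient-matrix route you must actually carry out the polynomial comparison (including the boundary cases you mention), and you should also record, as the paper does, that $K_1\nabla(K_{n-k-1}\cup kK_1)$ really has no $k$-tree (its dominating vertex has degree $\ge k+1$ in every spanning tree), so the exceptional case is genuine.
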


A \textit{matching} in a graph is a set of pairwise nonadjacent edges, and a \textit{perfect matching} is a matching that covers all vertices of the graph. The  relationship between the eigenvalues and the matching number of a graph was first studied by  Brouwer and Haemers \cite{A.Brouwer}. They described several sufficient conditions, in terms of the eigenvalues of the adjacency and Laplacian matrices, for a graph to contain a perfect matching. Up to now, much attention has been paid on this topic, and we refer the reader to \cite{A.Chang,S.Li,ci,ci1,ci2,S.O}. In addition,  the relationship between the eigenvalues and the fractional matching number (see \cite{SU} for the definition)  of a graph was also  investigated by several researchers in recent years \cite{O2,PLZ,XZS}.

The following fundamental theorem of Hall provided a sufficient and necessary condition for the existence of a perfect matching in a bipartite graph.

\begin{lem}(Hall's condition, \cite{Hall})\label{lem::1.2}
A bipartite graph $G=(X,Y)$ has a perfect matching if and only if $|X|=|Y|$ and $|N(S)|\geq|S|$ for all
$S\subseteq X$.
\end{lem}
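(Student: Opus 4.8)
The plan is to split Hall's biconditional into its easy and hard halves, proving necessity directly and sufficiency by induction on $n := |X|$. For necessity I would simply observe that a perfect matching $M$ pairs each vertex of $X$ with a distinct vertex of $Y$; this forces $|X| = |Y|$, and for any $S \subseteq X$ the $M$-partners of the vertices in $S$ are $|S|$ distinct vertices all lying in $N(S)$, whence $|N(S)| \geq |S|$.

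The substance lies in the sufficiency direction, where I assume $|X| = |Y| = n$ together with $|N(S)| \geq |S|$ for every $S \subseteq X$; since $|X| = |Y|$, any matching saturating $X$ is automatically perfect, so it suffices to produce such a matching. I would induct on $n$, the base case $n = 1$ being immediate because applying the hypothesis to $S = X$ already furnishes the single vertex of $X$ a neighbor. For the inductive step I would branch according to whether Hall's condition holds with slack everywhere. In the surplus case, where every nonempty $S \subsetneq X$ satisfies $|N(S)| \geq |S| + 1$, I pick any edge $xy$, commit it to the matching, and delete $x$ and $y$; in the resulting smaller bipartite graph every subset of $X \setminus \{x\}$ loses at most one neighbor, so the inequality $|N(S)| \geq |S|$ persists and the induction hypothesis finishes the job.

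The remaining (tight) case is where some nonempty proper subset $S_0 \subsetneq X$ satisfies $|N(S_0)| = |S_0|$. Here I would match $S_0$ into $N(S_0)$ by applying the induction hypothesis to the subgraph induced on $(S_0, N(S_0))$, which inherits Hall's condition, and then match $X \setminus S_0$ into $Y \setminus N(S_0)$ by a second application of induction. The key verification is that this latter subgraph again satisfies Hall's condition: for any $T \subseteq X \setminus S_0$, the number of neighbors of $T$ lying outside $N(S_0)$ is
$$|N(S_0 \cup T)| - |N(S_0)| \geq |S_0 \cup T| - |S_0| = |T|,$$
where the inequality is Hall's condition applied to $S_0 \cup T$ and the final equality uses the disjointness of $S_0$ and $T$. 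Splicing the two matchings together then saturates $X$, completing the induction.

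The hard part will be exactly this tight-set case: one must combine a hypothetical violator $T$ with the correct auxiliary set — namely the tight set $S_0$ — and exploit the equality $|N(S_0)| = |S_0|$ to carry the deficiency across the split. As an alternative route I could instead invoke K\"onig's theorem, that in a bipartite graph the maximum matching size equals the minimum vertex cover size, or run a max-flow/min-cut argument on the network obtained by joining a source to all of $X$ and all of $Y$ to a sink with unit capacities, in which a perfect matching corresponds to a flow of value $n$ and any Hall violator corresponds to a cut of value below $n$; but the self-contained induction above seems cleanest for the present purpose.
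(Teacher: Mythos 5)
Your proof is correct, but note that the paper does not prove this lemma at all: it is quoted as a known result with a citation to Hall's 1935 paper, and is only ever used in contrapositive form in the proofs of Theorems \ref{thm::1.4} and \ref{thm::1.5} (no perfect matching implies some $X_1 \subseteq X$ with $|N(X_1)| < |X_1|$, which lets the authors embed $G$ into $K_{s+1,s}\nabla_{1}K_{n-s-1,n-s}$). So there is no internal proof to compare against; what you have supplied is the standard Halmos--Vaughan induction, and it is complete. The necessity direction is fine; in the sufficiency direction, the surplus case works because any $S \subseteq X \setminus \{x\}$ is a nonempty proper subset of $X$ and loses at most the one neighbor $y$, and the tight case works because of the identity $|N(T) \setminus N(S_0)| = |N(S_0 \cup T)| - |N(S_0)|$ combined with tightness of $S_0$, exactly as you wrote; the size bookkeeping ($|S_0| < n$ and $|X \setminus S_0| < n$, using that $S_0$ is nonempty and proper) also checks out. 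The only point you glossed over, harmless here, is that the subgraph on $(S_0, N(S_0))$ inherits Hall's condition because $N(T) \subseteq N(S_0)$ for every $T \subseteq S_0$, so no neighbors are lost in passing to that subgraph.
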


A bipartite graph $G = (X, Y)$ is called \emph{balanced} if $|X| = |Y|$. Given two bipartite graphs $G_1=(X_{1}, Y_{1})$ and $G_2=(X_{2}, Y_{2})$, let $G_{1}\nabla_{1} G_2$ denote the graph obtained from $G_1 \cup G_2$ by adding all possible edges between $X_{2}$ and $Y_{1}$.

Very recently, O \cite{S.O} provided a lower bound for the adjacency spectral radius which guarantees the existence of a perfect matching in a graph. Motivated by Hall's condition and  the work of O, in this paper, we give a spectral radius condition for the existence of a perfect matching in a balanced bipartite graph  with minimum degree $\delta$.

\begin{thm}\label{thm::1.4}
Let $G$ be a balanced bipartite graph of order $2n$  with minimum degree $\delta\geq 1$.
\begin{enumerate}[(i)]
\item  If $\rho(G)\geq\rho(K_{\delta+1,\delta}\nabla_{1} K_{n-\delta-1,n-\delta})$,
then $G$ contains a perfect matching unless $G\cong K_{\delta+1,\delta}\nabla_{1} K_{n-\delta-1,n-\delta}$.
\item  If $q(G)\geq q(K_{\delta+1,\delta}\nabla_{1} K_{n-\delta-1,n-\delta})$,
then $G$ contains a perfect matching unless $G\cong K_{\delta+1,\delta}\nabla_{1} K_{n-\delta-1,n-\delta}$.
\end{enumerate}
\end{thm}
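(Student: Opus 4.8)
The plan is to prove both parts by contradiction, presenting the adjacency case (i) in detail; the signless Laplacian case (ii) runs in exact parallel. Write $H^{*}=K_{\delta+1,\delta}\nabla_{1}K_{n-\delta-1,n-\delta}$ and suppose $\rho(G)\ge\rho(H^{*})$ while $G$ has no perfect matching; the goal is to force $G\cong H^{*}$. One first notes that the question is non-trivial only when $n\ge 2\delta+1$: if $n\le 2\delta$, then a deficiency argument via Lemma~\ref{lem::1.2} shows a perfect matching always exists, so there is nothing to prove.

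First I would extract the combinatorial obstruction from Hall's condition. Since $G=(X,Y)$ is balanced with no perfect matching, after possibly interchanging $X$ and $Y$, Lemma~\ref{lem::1.2} yields a set $S\subseteq X$ with $|N(S)|\le|S|-1$. Put $s=|S|$, $T=N(S)$ and $t=|T|$. As every vertex of $S$ has at least $\delta$ neighbours, all inside $T$, we get $t\ge\delta$; as every vertex of $Y\setminus T$ has its at least $\delta$ neighbours inside $X\setminus S$, we get $s\le n-\delta$. Hence $\delta\le t\le s-1\le n-\delta-1$.

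Next I would pass to an edge-maximal model. The only pairs $G$ may not use are those between $S$ and $Y\setminus T$, so $G$ is a spanning subgraph of the graph $G_{s,t}$ in which $S$ is joined completely to $T$ and $X\setminus S$ is joined completely to all of $Y$. Since $\delta\ge1$, $G_{s,t}$ is connected, so $\rho(G)\le\rho(G_{s,t})$ with equality iff $G=G_{s,t}$. The partition $(S,X\setminus S,T,Y\setminus T)$ is equitable, and $\rho(G_{s,t})$ is the largest eigenvalue of the quotient matrix
\[
B_{s,t}=\begin{pmatrix}0&0&t&0\\0&0&t&n-t\\s&n-s&0&0\\0&n-s&0&0\end{pmatrix}.
\]
Moreover, for fixed $t$, moving one vertex from $X\setminus S$ into $S$ merely deletes the $n-t$ edges from that vertex to $Y\setminus T$, so $\rho(G_{s,t})$ is strictly decreasing in $s$; thus $\rho(G_{s,t})\le\rho(G_{t+1,t})$ and it suffices to analyse the one-parameter family $s=t+1$.

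The crux, and the step I expect to be the main obstacle, is to show that the largest eigenvalue of $B_{t+1,t}$, viewed as a function of $t$ on $[\delta,\,n-\delta-1]$, attains its maximum only at the endpoints $t=\delta$ and $t=n-\delta-1$ (these give isomorphic graphs, via the $X\leftrightarrow Y$ symmetry sending $t\mapsto n-1-t$, and $G_{\delta+1,\delta}=H^{*}$). Here I would compute $\phi(t,x)=\det(xI-B_{t+1,t})$ explicitly, substitute $x=\rho(H^{*})$, and show that the resulting expression in $t$ keeps a constant sign for $\delta<t<n-\delta-1$, which forces $\rho(G_{t+1,t})<\rho(H^{*})$ strictly in the interior; the expression is symmetric about $t=(n-1)/2$ and must be shown to be unimodal so that its maximum over the admissible range lies on the boundary. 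Granting this, the chain $\rho(H^{*})\le\rho(G)\le\rho(G_{s,t})\le\rho(G_{t+1,t})\le\rho(H^{*})$ collapses to equalities, forcing $G=G_{s,t}\cong H^{*}$, contrary to $G\not\cong H^{*}$; this proves (i). For (ii) the same scheme applies: adding an edge strictly increases the signless Laplacian spectral radius of a connected graph, and the same equitable partition gives the quotient matrix $\operatorname{diag}(t,n,n,n-s)+B_{s,t}$ of $Q(G_{s,t})$, after which the identical two-step reduction (first $s=t+1$, then the $t$-optimization against $q(H^{*})$) yields the claim.
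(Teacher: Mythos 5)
Your outline follows the same skeleton as the paper's proof: Hall's condition produces a deficient set, $G$ is then embedded as a spanning subgraph of a member of the one-parameter family $K_{t+1,t}\nabla_{1}K_{n-t-1,n-t}$ (your $G_{t+1,t}$), and the problem is reduced to locating the maximum of the spectral radius over this family via equitable quotient matrices. The paper handles the range of the parameter by taking $t<n/2$ (the $X\leftrightarrow Y$ symmetry you also note), and its Lemma~\ref{lem::4.1} proves that the spectral radius is strictly decreasing in $t$ on that range, which is exactly the endpoint-maximality you need.

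However, the step you yourself identify as the crux is not carried out, and the method you propose for it would fail as stated. Showing that $\phi\bigl(t,\rho(H^{*})\bigr)>0$ for $\delta<t<n-\delta-1$ does \emph{not} force $\rho(G_{t+1,t})<\rho(H^{*})$: a monic quartic can be strictly positive at a point lying \emph{below} its largest root (between its two pairs of roots), so positivity of the characteristic polynomial at a single point gives no upper bound on the Perron root. What is needed is positivity of $\phi(t,x)$ for \emph{all} $x\ge\rho(H^{*})$. This is precisely how the paper's Lemma~\ref{lem::4.1} is organized: comparing consecutive parameters, it shows $\varphi(B_{\Pi}^{s},x)-\varphi(B_{\Pi}^{s-1},x)=(n-2s)(x^{2}+2ns-2s^{2})>0$ for \emph{every} $x$ when $a=0$; since $\varphi(B_{\Pi}^{s-1},x)\ge 0$ on $[\lambda_{1}(B_{\Pi}^{s-1}),\infty)$, this yields $\lambda_{1}(B_{\Pi}^{s})<\lambda_{1}(B_{\Pi}^{s-1})$. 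Your claim that the signless Laplacian case is ``identical'' hides a second real issue: for $a=1$ the analogous difference is $2(n-2s)x(x-n)$, which is positive only for $x>n$, so one must first establish $q(K_{s,s-1}\nabla_{1}K_{n-s,n-s+1})>q(K_{1,n-1})=n$ before the comparison applies; without this extra step the argument does not go through. Your plan is repairable along these lines---for instance, for $a=0$ one can check directly that $\phi(t,x)-\phi(\delta,x)$ has a positive coefficient of $x^{2}$ and a positive constant term for $\delta<t\le(n-1)/2$, hence is positive for all $x$---but as written the decisive inequality is asserted rather than proved, and the route proposed to it is logically insufficient.
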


By a simple calculation,
$$\rho(K_{\delta+1,\delta}\nabla_{1} K_{n-\delta-1,n-\delta})=\frac{\sqrt{2n^2 - 2(\delta + 1)n +  2\delta(\delta+1) + 2f(n,\delta)}}{2},$$
where $f(n,\delta)=\sqrt{n^4 - 2(\delta + 1)n^3 + (1 - \delta^2)n^2 + 2\delta(3\delta^2 + 4\delta + 1)n - 3\delta^2(\delta+1)^2}$
and
$$q(K_{\delta+1,\delta}\nabla_{1} K_{n-\delta-1,n-\delta})=\frac{2n-1+\sqrt{4n^2 - ( 8\delta + 4)n + 8\delta^2 + 8\delta + 1}}{2}.$$
 Considering that the expression of $\rho(K_{\delta+1,\delta}\nabla_{1} K_{n-\delta-1,n-\delta})$  is complicated and the fact that $\rho(K_{\delta+1,\delta}\nabla_{1} K_{n-\delta-1,n-\delta})\geq \rho(K_{n-\delta-1,n}\cup (\delta+1)K_1)=\sqrt{n(n-\delta-1)}$,
  we provide the following result which improves Theorem \ref{thm::1.4} (i)  for sufficiently large $n$.

\begin{thm}\label{thm::1.5}
Let $G$ be a balanced bipartite graph of order $2n$ with minimum degree $\delta\geq 1$. If  $n\geq\frac{1}{2}\delta^{3}+\frac{1}{2}\delta^{2}+\delta+4$ and $\rho(G)\geq\sqrt{n(n-\delta-1)}$, then $G$ has a perfect matching unless $G\cong K_{\delta+1,\delta}\nabla_{1} K_{n-\delta-1,n-\delta}$.
\end{thm}

\section{Preliminaries}
In order to prove the main results for $\rho(G)$ and $q(G)$ simultaneously, we introduce the matrix $A_a(G)=aD(G)+A(G)$ and denote by $\rho_a(G)$ the largest eigenvalue of $A_a(G)$, where $a\geq 0$. Clearly, $A_0(G)=A(G)$ (resp. $A_1(G)=Q(G)$) and $\rho_0(G)=\rho(G)$ (resp. $\rho_1(G)=q(G)$).

The following lemma was proved for $A(G)$ and $Q(G)$ in \cite{Y.Hong-2,H.Liu}, and can be easily extended to the general matrix $A_a(G)$ for $a\geq 0$.

\begin{lem}\label{lem::2.1}
 Let $G$ be a connected graph, and let $u,v$ be two vertices of $G$.  Suppose that $v_{1},v_{2},\ldots,v_{s}\in N_{G}(v)\backslash N_{G}(u)$ with $s\geq 1$, and $G^*$ is the graph obtained from $G$ by deleting the edges $vv_{i}$ and adding the edges $uv_{i}$ for $1\leq i\leq s$. Let $x$ be the Perron vector of $A_a(G)$ where $a\geq 0$. If $x_{u}\geq x_{v}$, then $\rho_a(G)<\rho_a(G^*)$.
\end{lem}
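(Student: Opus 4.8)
The plan is to combine the Rayleigh (variational) characterization of the largest eigenvalue with the Perron--Frobenius theorem. First I would record the standing facts: since $G$ is connected and $a\ge 0$, the matrix $A_a(G)=aD(G)+A(G)$ is nonnegative, symmetric and irreducible, so by Perron--Frobenius its Perron vector $x$ has strictly positive entries; in particular $x_{v_i}>0$ for every $i$. Normalizing $x$ so that $x^{\top}x=1$, we have $\rho_a(G)=x^{\top}A_a(G)x$, and for the perturbed graph the variational bound gives $\rho_a(G^*)\ge x^{\top}A_a(G^*)x$.

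The key computation is to evaluate $x^{\top}A_a(G^*)x-x^{\top}A_a(G)x = x^{\top}\bigl(A_a(G^*)-A_a(G)\bigr)x$ by tracking exactly what the edge shift changes. Passing from $G$ to $G^*$ raises $d(u)$ by $s$ and lowers $d(v)$ by $s$, while leaving every other degree (including those of the $v_i$, which each lose the edge to $v$ but gain the edge to $u$) unchanged; hence the diagonal part contributes $a s(x_u^2-x_v^2)$. For each $i$ the shift deletes $vv_i$ and inserts $uv_i$, so the off-diagonal part contributes $2\sum_{i=1}^{s}x_{v_i}(x_u-x_v)$. Therefore
\[
x^{\top}A_a(G^*)x-x^{\top}A_a(G)x \;=\; a s\,(x_u^2-x_v^2)\;+\;2(x_u-x_v)\sum_{i=1}^{s}x_{v_i}.
\]
Under the hypothesis $x_u\ge x_v$ both summands are nonnegative (using $x_u,x_v>0$ for the first), which already yields $\rho_a(G^*)\ge x^{\top}A_a(G^*)x\ge \rho_a(G)$.

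To upgrade this to the strict inequality claimed I would argue by contradiction. If $\rho_a(G^*)=\rho_a(G)$, then every inequality above is an equality, so $x$ attains the maximum of the Rayleigh quotient of $A_a(G^*)$; by the spectral theorem $x$ must then be an eigenvector of $A_a(G^*)$ for its largest eigenvalue, i.e.\ $A_a(G^*)x=\rho_a(G)x$. Reading off the $u$-th coordinate and using that the $v_i$ lie outside $N_G(u)$ (so no term is double-counted), one gets $(A_a(G^*)x)_u=(A_a(G)x)_u+a s x_u+\sum_{i=1}^{s}x_{v_i}$. Since $x$ is the Perron vector of $A_a(G)$, $(A_a(G)x)_u=\rho_a(G)x_u$, and comparison forces $a s x_u+\sum_{i=1}^{s}x_{v_i}=0$; but $s\ge 1$ and $x_{v_i}>0$ make the left-hand side strictly positive, a contradiction. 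Hence $\rho_a(G)<\rho_a(G^*)$.

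I expect the genuine obstacle to be precisely the boundary case $x_u=x_v$: there the quadratic-form difference vanishes, so the Rayleigh estimate alone only delivers $\rho_a(G^*)\ge\rho_a(G)$ with no strict gain. The coordinate-wise eigenvector argument at $u$ is what forces strictness, and it is essential that the shift creates new positive contributions at $u$ (the terms $a s x_u$ and $\sum_i x_{v_i}$) that a genuine $\rho_a(G)$-eigenvector of $A_a(G^*)$ could not absorb. The remaining points are routine and I would state them briefly: irreducibility of $A_a(G^*)$ is not needed, since the spectral-theorem step (equality in the Rayleigh quotient implies membership in the top eigenspace) holds for any symmetric matrix, and positivity of $x$ follows directly from connectedness of $G$.
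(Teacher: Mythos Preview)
The paper does not actually supply a proof of this lemma; it only remarks that the result was proved for $A(G)$ and $Q(G)$ in the cited references and ``can be easily extended'' to $A_a(G)$ for $a\ge 0$. So there is no in-paper argument to compare against.

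Your argument is correct and is the standard one: compute the change in the Rayleigh quotient under the edge shift, use $x_u\ge x_v$ and positivity of the Perron vector to get $\rho_a(G^*)\ge\rho_a(G)$, and then rule out equality. Your handling of the boundary case $x_u=x_v$ is clean: if equality held throughout, $x$ would lie in the top eigenspace of $A_a(G^*)$, and comparing $u$-th coordinates yields $asx_u+\sum_i x_{v_i}=0$, which is impossible since $s\ge1$ and each $x_{v_i}>0$. The only implicit assumption worth flagging is that none of the $v_i$ equals $u$ (otherwise ``adding $uv_i$'' would be a loop); this is tacit in the lemma's statement and does not affect your computation.
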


Also, by the well-known Perron-Frobenius theorem (cf. \cite[Section 8.8]{C.Godsil}), we can easily deduce the following result.

\begin{lem}\label{lem::2.2}
Let $a\geq 0$. If $H$ is a spanning subgraph of a connected graph $G$, then
$$\rho_{a}(H)\leq\rho_{a}(G),$$
with equality if and only if $H\cong G$.
\end{lem}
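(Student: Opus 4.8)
The plan is to realize $A_a(H)$ and $A_a(G)$ as nonnegative symmetric matrices on the common vertex set $V=V(H)=V(G)$ that obey an entrywise inequality, and then combine the monotonicity of the Perron root with a Rayleigh-quotient computation to pin down the equality case. The whole argument is a clean instance of Perron--Frobenius, so the work is essentially in organizing the equality analysis so that it covers all $a\geq 0$ at once.

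First I would record the key entrywise comparison. Since $H$ is a spanning subgraph of $G$, we have $V(H)=V(G)$ and $E(H)\subseteq E(G)$; hence $A(H)\leq A(G)$ entrywise and $d_H(v)\leq d_G(v)$ for every $v$, so $D(H)\leq D(G)$. Because $a\geq 0$, adding these gives $A_a(H)=aD(H)+A(H)\leq aD(G)+A(G)=A_a(G)$ entrywise, so that $M:=A_a(G)-A_a(H)$ is a nonnegative symmetric matrix. I would also note that the off-diagonal support of $A_a(G)$ is exactly $G$, so connectedness of $G$ makes $A_a(G)$ an irreducible nonnegative symmetric matrix; its largest eigenvalue $\rho_a(G)$ is therefore simple and is attained by a strictly positive Perron eigenvector.

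Next I would prove the inequality via the Rayleigh quotient. Let $z\geq 0$ with $\|z\|=1$ be a Perron eigenvector of $A_a(H)$ for $\rho_a(H)$ (such a nonnegative $z$ exists for any nonnegative symmetric matrix, even if $H$ is disconnected). Since $M\geq 0$ entrywise and $z\geq 0$, the form $z^{\top}Mz=\sum_{u,v}M_{uv}z_uz_v\geq 0$, whence
\[
\rho_a(H)=z^{\top}A_a(H)z=z^{\top}A_a(G)z-z^{\top}Mz\leq z^{\top}A_a(G)z\leq\rho_a(G),
\]
where the final step is the Rayleigh bound for the symmetric matrix $A_a(G)$.

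For equality, suppose $\rho_a(H)=\rho_a(G)$, so both inequalities above are tight. Tightness of the Rayleigh bound, $z^{\top}A_a(G)z=\rho_a(G)$ with $\|z\|=1$, forces $z$ to be a unit eigenvector of $A_a(G)$ for $\rho_a(G)$; by irreducibility this eigenspace is one-dimensional and spanned by the positive Perron vector, so $z>0$. Tightness of the first inequality gives $z^{\top}Mz=0$, and since each summand $M_{uv}z_uz_v\geq 0$ with $z_uz_v>0$, every $M_{uv}=0$, i.e.\ $M=0$ and $A_a(H)=A_a(G)$. Reading off the off-diagonal entries (which coincide with $A(H)$ and $A(G)$ regardless of $a$) yields $E(H)=E(G)$, hence $H\cong G$. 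The only delicate point is to keep the argument uniform in $a\geq 0$, and the strict positivity of the Perron vector does exactly that: it lets me conclude $M=0$ without splitting into the cases $a=0$ and $a>0$, and the edge set is recovered from the off-diagonal block in either case.
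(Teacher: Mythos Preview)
Your proof is correct and is precisely the standard Perron--Frobenius argument the paper has in mind; the paper itself does not write out a proof but merely cites the Perron--Frobenius theorem, so your detailed Rayleigh-quotient derivation with the equality analysis via strict positivity of the Perron vector is exactly what that citation unpacks to.
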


\begin{lem}(Hong \cite{Y.Hong})\label{lem::2.3}
Let $G$ be a graph with $n$ vertices and $m$ edges. Then
                  $$\rho_0(G)=\rho(G)\leq\sqrt{2m-n+1}.$$
\end{lem}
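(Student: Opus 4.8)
The plan is to bound $\rho(G)$ by working with its Perron eigenvector together with a counting argument centred at the vertex of maximum eigenvector weight, the key trick being to square $\rho$ so that length-two walks get converted into a sum of degrees. Throughout I take $G$ connected (as in Hong's hypothesis), so that by the Perron--Frobenius theorem the leading eigenvector $x$ of $A(G)$ is strictly positive; I normalize it by setting its largest entry equal to $1$, say $x_v = \max_{w} x_w = 1$ at some vertex $v$.

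First I would write the eigenvalue equation at $v$, namely $\rho x_v = \sum_{j \sim v} x_j$, and then square it: since $\rho^2 x_v = \rho \sum_{j \sim v} x_j = \sum_{j \sim v} \rho x_j = \sum_{j \sim v} \sum_{k \sim j} x_k$, the normalization $x_v = 1$ yields $\rho^2 = \sum_{j \sim v} \sum_{k \sim j} x_k$. Because every entry satisfies $x_k \le x_v = 1$, each inner sum is at most $d_j$, so that $\rho^2 \le \sum_{j \sim v} d_j$. This is the crucial reduction: it replaces the spectral radius by a purely combinatorial quantity, the sum of degrees over the neighbourhood of $v$.

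The second step is the degree count $\sum_{j \sim v} d_j \le 2m - n + 1$. I would split the handshake identity $2m = \sum_{w} d_w$ into three parts: the term $d_v$, the sum over $N(v)$, and the sum over the remaining $n - 1 - d_v$ vertices lying outside $\{v\} \cup N(v)$. Since $G$ is connected, each such remaining vertex has degree at least $1$, so their degree sum is at least $n - 1 - d_v$; rearranging gives $\sum_{j \sim v} d_j \le 2m - d_v - (n - 1 - d_v) = 2m - n + 1$. Combining with the first step produces $\rho^2 \le 2m - n + 1$, that is, $\rho \le \sqrt{2m - n + 1}$.

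The only delicate point is the role of connectivity, and this is where the argument must be handled with care rather than where it is technically hard: the bound is genuinely false for graphs carrying isolated vertices (for instance $K_3 \cup K_1$ has $\rho = 2 > \sqrt{3}$), precisely because the degree count collapses as soon as some vertex outside $\{v\} \cup N(v)$ has degree $0$. Thus the step that must be guarded is the lower bound $\sum_{w \notin \{v\} \cup N(v)} d_w \ge n - 1 - d_v$, which relies exactly on the absence of isolated vertices. Everything else is a direct manipulation of the eigenvalue equation, so once connectivity is invoked the proof closes immediately.
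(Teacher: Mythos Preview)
The paper does not supply its own proof of this lemma; it is merely quoted from Hong's original article \cite{Y.Hong}. Your argument is correct and is in fact essentially Hong's original proof: squaring the eigenvalue equation at the vertex of maximum eigenvector entry and then bounding $\sum_{j\sim v} d_j$ via the handshake identity together with $d_w\ge 1$ for the remaining vertices. Your remark about the necessity of excluding isolated vertices (which connectivity guarantees) is exactly right, and is the one hypothesis the paper's statement of the lemma has silently suppressed; in the paper's application the graph $G_s^2=K_s\nabla(\cdots)$ is connected, so no issue arises there.
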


\begin{lem}(Das \cite{K.Das})\label{lem::2.4}
Let $G$ be a graph with $n$ vertices and $m$ edges. Then
                      $$\rho_1(G)=q(G)\leq\frac{2m}{n-1}+n-2.$$
\end{lem}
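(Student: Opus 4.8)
The plan is to run a Perron-vector argument on $Q(G)$ and to isolate the essential inequality by passing to the complement $\overline G$. First I would take $x=(x_1,\dots ,x_n)^{\mathsf T}\ge 0$ to be a unit eigenvector of $Q(G)$ associated with $q=q(G)$ (nonnegative by Perron--Frobenius applied to the component realizing $q$ and extended by zeros), and record the quadratic-form identity
\[
q=x^{\mathsf T}Q(G)x=\sum_{uv\in E(G)}(x_u+x_v)^2 .
\]
Expanding the same form over \emph{all} $\binom{n}{2}$ unordered pairs gives $\sum_{\{u,v\}}(x_u+x_v)^2=(n-1)\sum_i x_i^2+\big(\sum_i x_i\big)^2-\sum_i x_i^2=(n-2)+\big(\sum_i x_i\big)^2$, and splitting this sum into edges and non-edges of $G$ yields
\[
q=(n-2)+\Big(\sum_i x_i\Big)^2-\sum_{uv\notin E(G)}(x_u+x_v)^2 .
\]
Writing $s=\sum_i x_i$, the theorem reduces, for this Perron vector $x$, to the single inequality
\[
\sum_{uv\notin E(G)}(x_u+x_v)^2\ \ge\ s^2-\frac{2m}{n-1}.
\]

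The left-hand quantity equals $x^{\mathsf T}Q(\overline G)x$, and this is the natural place to spend the remaining budget. A natural route is to bound it below by Cauchy--Schwarz over the $\overline m=\binom n2-m$ non-edges,
\[
\sum_{uv\notin E(G)}(x_u+x_v)^2\ \ge\ \frac{1}{\overline m}\Big(\sum_i \overline d_i\,x_i\Big)^2 ,
\]
where $\overline d_i=n-1-d_i$ is the degree of $i$ in $\overline G$, and then to control $\sum_i \overline d_i x_i$ from below in terms of $s$, feeding in the eigen-equations $(q-d_i)x_i=\sum_{j\sim i}x_j$ (which tie each weight $x_i$ to $d_i$) together with $d_i\le n-1$. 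The equality cases $K_n$ (where $\overline G$ is empty and $s^2=n$) and the star $K_{1,n-1}$ (where every non-edge $uv$ has $x_u+x_v$ constant, so the Cauchy--Schwarz step is tight) should pin down exactly how this coupling must be used and guide the characterization of equality.

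The hard part is precisely this last step, and the difficulty is structural: in the Perron vector the heaviest weights sit on the highest-degree vertices, which are the vertices of \emph{smallest} complement degree, so $\sum_i \overline d_i x_i$ can be far smaller than $s$ and a crude estimate loses too much. This is also why first- and second-moment information alone is insufficient: using only $\sum_i q_i(G)=2m$ and $\sum_i q_i(G)^2=2m+\sum_i d_i^2$ (even combined with de Caen's bound $\sum_i d_i^2\le m(\tfrac{2m}{n-1}+n-2)$, whose right-hand side is exactly ours) yields only a Samuelson-type estimate that is already too weak for the star. One genuinely needs the combinatorics of which vertices are adjacent, i.e.\ a neighborhood-by-neighborhood or degree-ordered rearrangement refinement of the Cauchy--Schwarz step, and I expect the bulk of the work to live there. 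As a useful sanity check, the rank-one identity $Q(G)+Q(\overline G)=J+(n-2)I$ (with $J$ the all-ones matrix) together with Weyl interlacing gives $q_i(G)\le n-2$ for all $i\ge 2$, confirming that all of the excess must be concentrated in $q=q_1(G)$.
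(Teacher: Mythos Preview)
The paper does not prove this lemma; it is quoted as a known result of Das, so there is no in-paper argument to compare your proposal against.

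On the substance: your proposal does not contain a proof, and its central ``reduction'' is circular. The rank-one identity $Q(G)+Q(\overline G)=(n-2)I+J$ that you yourself record at the end gives $x^{\mathsf T}Q(\overline G)x=(n-2)+s^{2}-q$ for \emph{every} unit vector $x$; hence the inequality you isolate,
\[
x^{\mathsf T}Q(\overline G)x\ \ge\ s^{2}-\frac{2m}{n-1},
\]
is literally equivalent to the target $q\le \frac{2m}{n-1}+n-2$. Passing to the complement is a restatement, not a reduction, and no step in your outline uses any property of $x$ beyond being a unit vector (in particular, the Perron structure of $x$ is never invoked). The Cauchy--Schwarz idea you then float, and immediately doubt, is indeed too weak for exactly the reason you diagnose: the large coordinates of $x$ live on high-degree vertices of $G$, hence low-degree vertices of $\overline G$, so $\sum_i \overline d_i x_i$ can be much smaller than $s$ and the lower bound collapses.

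The standard proofs of this bound do not route through the complement. They work locally at a vertex of maximum Perron-coordinate (reading off one or two eigen-equations and combining them with $\sum_j d_j=2m$), or they first bound $q(G)$ by a Merris-type quantity such as $\max_v\{d_v+m_v\}$ and then estimate that combinatorially. Either way the crux is precisely the local, degree-ordered combinatorics that you correctly flag as missing from your outline; until that piece is supplied, the proposal is a reformulation rather than a proof.
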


Let $M$ be a real $n\times n$ matrix, and let $X=\{1,2,\ldots,n\}$. Given a partition $\Pi=\{X_1,X_2,\ldots,X_k\}$ with $X=X_{1}\cup X_{2}\cup \cdots \cup X_{k}$, the matrix $M$ can be partitioned as
$$
M=\left(\begin{array}{ccccccc}
M_{1,1}&M_{1,2}&\cdots &M_{1,k}\\
M_{2,1}&M_{2,2}&\cdots &M_{2,k}\\
\vdots& \vdots& \ddots& \vdots\\
M_{k,1}&M_{k,2}&\cdots &M_{k,k}\\
\end{array}\right).
$$
The \textit{quotient matrix} of $M$ with respect to $\Pi$ is defined as the $k\times k$ matrix $B_\Pi=(b_{i,j})_{i,j=1}^k$ where $b_{i,j}$ is the  average value of all row sums of $M_{i,j}$.
The partition $\Pi$ is called \textit{equitable} if each block $M_{i,j}$ of $M$ has constant row sum $b_{i,j}$.
Also, we say that the quotient matrix $B_\Pi$ is \textit{equitable} if $\Pi$ is an equitable partition of $M$.

\begin{lem}(Brouwer and Haemers \cite[p. 30]{BH}; Godsil and Royle\cite[pp. 196--198]{C.Godsil})\label{lem::2.5}
Let $M$ be a real symmetric matrix, and let $\lambda_{1}(M)$ be the largest eigenvalue of $M$. If $B_\Pi$ is an equitable quotient matrix of $M$, then the eigenvalues of  $B_\Pi$ are also eigenvalues of $M$. Furthermore, if $M$ is nonnegative and irreducible, then $\lambda_{1}(M) = \lambda_{1}(B_\Pi).$
\end{lem}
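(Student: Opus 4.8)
The plan is to encode the partition $\Pi$ by its characteristic matrix and convert the equitability hypothesis into a single matrix identity, from which both assertions follow. Let $S$ be the $n\times k$ matrix whose $j$-th column is the characteristic vector of the cell $X_j$, so that each row of $S$ has exactly one nonzero entry, equal to $1$. Since the cells are nonempty with pairwise disjoint supports, the columns of $S$ are linearly independent, so $S$ has full column rank $k$; moreover $S\mathbf{1}_k=\mathbf{1}_n$, so the all-ones vector lies in the column space $\mathcal{U}:=\operatorname{col}(S)$. The first step is to verify that equitability is precisely the statement
$$
MS=SB_\Pi.
$$
Indeed, for $v\in X_i$ the $(v,j)$ entry of $MS$ is the block row sum $\sum_{u\in X_j}M_{v,u}=b_{i,j}$, constant on $X_i$ by equitability, and this agrees with the $(v,j)$ entry of $SB_\Pi$.

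From this identity the first assertion is immediate. If $B_\Pi y=\mu y$ with $y\neq 0$, then $M(Sy)=(MS)y=(SB_\Pi)y=\mu(Sy)$, and $Sy\neq 0$ because $S$ has full column rank; hence $\mu$ is an eigenvalue of $M$ with eigenvector $Sy$. Thus every eigenvalue of $B_\Pi$ is an eigenvalue of $M$, and in particular $\lambda_1(B_\Pi)\le\lambda_1(M)$.

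It remains to prove the reverse inequality when $M$ is nonnegative and irreducible, which is the crux of the argument. The identity $MS=SB_\Pi$ shows that $\mathcal{U}$ is $M$-invariant, and since $M$ is symmetric its orthogonal complement $\mathcal{U}^\perp$ is $M$-invariant as well. I would let $x$ be the Perron vector of $M$, so $Mx=\lambda_1(M)x$ with $x>0$, and write $x=x_1+x_2$ with $x_1\in\mathcal{U}$ and $x_2\in\mathcal{U}^\perp$. Applying $M$, using the invariance of both summands, and invoking the uniqueness of the orthogonal decomposition yields $Mx_2=\lambda_1(M)x_2$. The hard part is to conclude that $x_2=0$: by the Perron--Frobenius theorem the spectral radius of $M$, which equals its largest eigenvalue $\lambda_1(M)$ because $M$ is symmetric, is a simple eigenvalue whose eigenspace is spanned by the positive vector $x$; hence $x_2$ must be a scalar multiple of $x$. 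But $x_2\in\mathcal{U}^\perp$ is orthogonal to $\mathbf{1}_n\in\mathcal{U}$, so its coordinates sum to zero, whereas any nonzero multiple of the positive vector $x$ has nonzero coordinate sum. Therefore $x_2=0$ and $x=x_1\in\mathcal{U}$.

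Finally, writing $x=Sy$ and substituting into the basic identity gives $S\bigl(\lambda_1(M)y-B_\Pi y\bigr)=M(Sy)-SB_\Pi y=0$; since $S$ has full column rank this forces $B_\Pi y=\lambda_1(M)y$, so $\lambda_1(M)$ is an eigenvalue of $B_\Pi$ and $\lambda_1(M)\le\lambda_1(B_\Pi)$. Combined with the inequality from the first assertion, this gives $\lambda_1(M)=\lambda_1(B_\Pi)$, as required. I expect the only delicate point to be the middle step: it is the \emph{simplicity} of the Perron eigenvalue, together with the orthogonality $x_2\perp\mathbf{1}_n$, that rules out a nontrivial component of the Perron vector in $\mathcal{U}^\perp$ and thereby places $x$ in the column space of $S$.
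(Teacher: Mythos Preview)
Your argument is correct and is essentially the standard proof found in the references the paper cites (Brouwer--Haemers; Godsil--Royle): encode the partition by its characteristic matrix $S$, use the identity $MS=SB_\Pi$ to lift eigenvectors of $B_\Pi$ to eigenvectors of $M$, and for the second part use Perron--Frobenius together with the $M$-invariance of $\operatorname{col}(S)$ to show the Perron vector lies in $\operatorname{col}(S)$. Note, however, that the paper itself does \emph{not} give a proof of this lemma at all; it is stated with attribution and used as a black box, so there is no in-paper argument to compare yours against. Your write-up would serve perfectly well as the omitted proof.
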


\section{Proof of Theorem \ref{thm::1.3}}
In this section,  we give the proof of Theorem \ref{thm::1.3}. Before doing this, we need the following lemma.

\begin{lem}\label{lem::3.1}
Let $a\geq 0$ and $n=\sum_{i=1}^t n_i+s$. If $n_{1}\geq n_{2}\geq \cdots\geq n_{t}\geq 1$ and $n_{1}<n-s-t+1$, then
$$\rho_{a}(K_{s} \nabla (K_{n_{1}}\cup K_{n_{2}}\cup \cdots \cup K_{n_{t}}))<\rho_{a}(K_{s} \nabla (K_{n-s-t+1}\cup (t-1)K_{1})).$$
\end{lem}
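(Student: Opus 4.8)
The plan is to show that the right-hand graph is the unique maximizer of $\rho_a$ over the family $\mathcal{F}$ of all graphs $K_s \nabla (K_{m_1} \cup \cdots \cup K_{m_t})$ with $m_1 \ge \cdots \ge m_t \ge 1$ and $\sum_i m_i = n-s$ (with $s$ and the number $t$ of cliques fixed). Concretely, I would prove that whenever a member of $\mathcal{F}$ has two or more cliques of size at least $2$, one can produce another member $G' \in \mathcal{F}$ with strictly larger $\rho_a$ and strictly fewer cliques of size $\ge 2$. Iterating this finitely often reaches the extremal graph $K_s \nabla (K_{n-s-t+1}\cup (t-1)K_1)$, and concatenating the strict inequalities yields the claim for the given $G$. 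Note the hypothesis $n_1 < n-s-t+1$ is exactly what forces $G$ to have at least two cliques of size $\ge 2$: since $\sum_i n_i = n-s$, this inequality gives $\sum_{i\ge 2} n_i > t-1$, so some $n_i$ with $i\ge 2$ is $\ge 2$, and also $n_1\ge 2$.

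The single transformation combines Lemmas~\ref{lem::2.1} and \ref{lem::2.2}. Let $C_1$ be a largest clique and $C_j$ another clique with $|C_j|=n_j\ge 2$. Fix $u\in C_1$ and $v\in C_j$. The $n_j-1\ge 1$ vertices of $C_j\setminus\{v\}$ lie in $N_G(v)\setminus N_G(u)$ (adjacent to $v$, but lying in a different clique, not to $u$). Deleting the edges from $v$ to $C_j\setminus\{v\}$ and adding the edges from $u$ to $C_j\setminus\{v\}$ produces a graph $G^*$ in which $v$ is joined only to $S$, i.e.\ relative to the clique part it becomes a singleton $K_1$, while $C_1\cup(C_j\setminus\{v\})$ forms one component that is a proper spanning subgraph of $K_{n_1+n_j-1}$; the other cliques are untouched. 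Hence $G^*$ is a proper spanning subgraph of the member $G':=K_s \nabla\bigl(K_{n_1+n_j-1}\cup K_1\cup\bigcup_{i\ne 1,j}K_{n_i}\bigr)\in\mathcal{F}$, which still has $t$ cliques and one fewer clique of size $\ge 2$. Provided the Perron vector $x$ of $A_a(G)$ satisfies $x_u\ge x_v$, Lemma~\ref{lem::2.1} gives $\rho_a(G)<\rho_a(G^*)$ and Lemma~\ref{lem::2.2} gives $\rho_a(G^*)\le\rho_a(G')$, so $\rho_a(G)<\rho_a(G')$.

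The one technical point, and the step I expect to be the crux, is verifying $x_u\ge x_v$. Since all vertices of a given clique are adjacent to each other and to all of $S$ and to nothing else, any internal permutation of a clique is an automorphism fixing the rest of $G$, so the unique positive Perron vector $x$ is constant on each clique and on $S$; write $x_m$ for its value on a clique of size $m$ and $x_S$ for its value on $S$. Reading off the eigen-equation of $A_a(G)=aD(G)+A(G)$ at a clique vertex gives $\bigl[\rho_a-(a+1)(m-1)-as\bigr]x_m=s\,x_S$, whence $x_m=s\,x_S/g(m)$ with $g(m):=\rho_a-(a+1)(m-1)-as>0$; as $g$ is strictly decreasing in $m$ for $a\ge 0$, the Perron entry is increasing in the clique size. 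Since $u$ lies in a largest clique and $v$ in a clique of size $\ge 2$ that is no larger, we get $x_u\ge x_v$, the required hypothesis. Finally, each transformation preserves $t$ and $n-s$ but strictly decreases the number of cliques of size $\ge 2$, so the process terminates precisely at $K_s \nabla (K_{n-s-t+1}\cup (t-1)K_1)$, completing the chain of strict inequalities.
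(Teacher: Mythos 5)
Your proof is correct, but it takes a different route from the paper's. Both arguments hinge on the same key fact, established the same way: reading the eigen-equation of $A_a$ at a clique vertex gives $\bigl[\rho_a-(a+1)(m-1)-as\bigr]x_m=s\,x_S>0$, so the Perron entry is monotone increasing in the clique size (the paper phrases this as $x_1\geq x_j$ for $2\leq j\leq t$). Where you diverge is in how the comparison of spectral radii is completed. The paper makes a single global comparison with the target graph $G'=K_s\nabla(K_{n-s-t+1}\cup(t-1)K_1)$: it applies Rayleigh's principle once, $\rho_a(G')-\rho_a(G)\geq x^{T}(A_a(G')-A_a(G))x$, and expands the quadratic form explicitly over all edges gained and lost (and, for $a>0$, all degree changes), verifying positivity term by term. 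You instead iterate a local Kelmans-type move: shift the edges of one vertex $v$ of a smaller clique onto a vertex $u$ of a largest clique, invoke Lemma~\ref{lem::2.1} (which the paper states but never uses for this lemma --- it only appears in the proof of Lemma~\ref{lem::4.2}) to get $\rho_a(G)<\rho_a(G^*)$, then Lemma~\ref{lem::2.2} to pass to the supergraph $G'\in\mathcal{F}$ with one fewer clique of size $\geq 2$, and terminate by a counting argument. Your bookkeeping is sound: the hypothesis $n_1<n-s-t+1$ does force at least two cliques of size $\geq 2$, each move preserves $t$ and $\sum_i n_i$ while reducing the number of nontrivial cliques, and the Perron monotonicity must be (and is) re-derived for each intermediate graph, which works since they all lie in $\mathcal{F}$. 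The trade-off: your modular approach avoids the paper's lengthy and error-prone explicit expansion of $x^{T}(A_a(G')-A_a(G))x$, at the cost of an induction and repeated appeals to the two preliminary lemmas; the paper's one-shot computation is self-contained but opaque, and its positivity claim silently uses exactly the same structural fact you make explicit (that some $n_j\geq 2$ with $j\geq 2$, so at least one term in the sum is strictly positive).
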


\begin{proof}
Let $G=K_{s} \nabla (K_{n_{1}}\cup K_{n_{2}}\cup \cdots \cup K_{n_{t}})$, and let $x$ denote the Perron vector of $A_{a}(G)$. By symmetry, we can suppose that $x(v)=x_i$ for all $v\in V(K_{n_{i}})$, where $1\leq i\leq t$, and $x(u)=y_1$ for all $u\in V(K_{s})$.
Then, from $A_{a}(G)x=\rho_{a}(G) x$, we have
$$(\rho_{a}(G)-((a+1)(n_{1}-1)+as)) x_{1}=sy_{1}>0,$$
which gives that $\rho_{a}(G)> (a+1)(n_{1}-1)+as$. Again, for $2\leq j\leq t$, we see that
$$(\rho_{a}(G)-((a+1)(n_{j}-1)+as))(x_{1}-x_{j})=(a+1)(n_{1}-n_{j})x_{1}\geq 0.$$
Since $\rho_{a}(G)>(a+1)(n_{1}-1)+as\geq (a+1)(n_{j}-1)+as$, we must have $x_{1}\geq x_{j}$ for $2\leq j\leq t$. Let $G'=K_{s} \nabla (K_{n-s-t+1}\cup (t-1)K_{1})$. Then
\begin{equation*}
\begin{aligned}
                         \rho_{a}(G')-\rho_{a}(G)&\geq x^{T}(A_{a}(G')-A_{a}(G))x\\
                                 &=2\left(\sum_{j=2}^{t}(n_{j}-1)x_{j}(n_{1}x_{1}-x_{j})+
                                    \sum_{i=2}^{t-1}\sum_{j=i+1}^{t}(n_{i}-1)(n_{j}-1)x_{i}x_{j}\right)\\
                                 &+a\sum_{j=2}^{t}(n_{j}-1)(n_{1}x^{2}_{1}+(n_{1}-2)x^{2}_{j})+
                                 a\sum_{i=2}^{t-1}\sum_{j=i+1}^{t}(n_{i}-1)(n_{j}-1)(x^{2}_{i}+x^{2}_{j})\\
                                  &>0,
\end{aligned}
\end{equation*}
and the result follows.
\end{proof}

Now, we are in a position to give the  proof of Theorem \ref{thm::1.3}.
\renewcommand\proofname{\bf Proof of Theorem \ref{thm::1.3}}
\begin{proof}
Assume that $G$ is a connected graph of order $n$ containing no $k$-trees for some $k\geq 3$ and $n \geq 2k+16$. By Lemma \ref{lem::1.1}, there exists some  nonempty subset $S$ of $V(G)$ such that $c(G-S)\geq(k-2)|S|+3$. Let $|S|=s$  and $t=(k-2)s+3$. Then  $G$ is a spanning subgraph of $G_s^1=K_{s} \nabla (K_{n_1}\cup K_{n_2}\cup \cdots \cup K_{n_t})$ for some positive integers $n_1\geq n_2\geq \cdots\geq n_t$ with  $\sum_{i=1}^tn_i=n-s$.
Let $a\in\{0,1\}$. By Lemma \ref{lem::2.2}, we obtain
\begin{equation}\label{equ::1}
\rho_{a}(G)\leq\rho_{a}(G_s^1),
\end{equation}
where equality holds if and only if $G\cong G_s^1$.

Take $G_s^2=K_{s} \nabla (K_{n-s-t+1}\cup (t-1)K_{1})$. Note that $t=(k-2)s+3$. By Lemma \ref{lem::3.1},
\begin{equation}\label{equ::2}
\rho_{a}(G_s^1)\leq \rho_{a} (G_{s}^2),
\end{equation}
with equality holding if and only if $(n_1,\ldots,n_t)=(n-s-t+1,1,\ldots,1)$.

If $s=1$, then $G_{s}^2\cong K_{1} \nabla (K_{n-k-1}\cup kK_{1})$.
Combining this with (\ref{equ::1}), we may conclude that
$$\rho_a(G)\leq\rho_a(K_{1} \nabla (K_{n-k-1}\cup kK_{1})),$$
where equality holds if and only if $G\cong K_{1} \nabla (K_{n-k-1}\cup kK_{1})$.
For $s\geq 2$, we consider the two cases $a=0$ and $a=1$.

{\flushleft\bf Case 1.} $a=0.$

By Lemma \ref{lem::2.3},
\begin{equation}\label{equ::3}
\begin{aligned}
\rho_{0}(G_{s}^2)&\leq\sqrt{2m(G_{s}^2)-n+1}\\
&=\sqrt{[(n-(k-2)s-2)(n-(k-2)s-3)+2((k-2)s+2)s]-n+1}\\
&=\sqrt{(k^{2}-2k)s^{2}-(2kn-5k-4n+6)s+n^{2}-6n+7}.
\end{aligned}
\end{equation}
Let $f(s)=(k^{2}-2k)s^{2}-(2kn-5k-4n+6)s+n^{2}-6n+7$. Since $n\geq s+t=(k-1)s+3$, $2 \leq s\leq(n-3)/(k-1)$. By a simple calculation,
\begin{equation*}
\begin{aligned}
&~~~~f(2)-f\Big(\frac{n-3}{k-1}\Big)\\
&=\frac{(n-2k-1)((k-2)^{2}n-2k^3 +4k^2 +k-6)}{(k-1)^{2}}\\
&\geq\frac{15((k-2)^{2}(2k+16)-2k^3 +4k^2 +k-6)}{(k-1)^{2}}\\
&=\frac{15(12k^{2}-55k+58)}{(k-1)^{2}}\\
&>0,
\end{aligned}
\end{equation*}
where the inequalities follow from the fact that $n\geq 2k+16$ and  $k\geq 3$.
This implies that, for $2\leq s\leq(n-3)/(k-1)$, the maximum value of $f(s)$ is attained at $s=2$, and so from (\ref{equ::3}), we deduce that
\begin{equation}\label{equ::4}
\begin{aligned}
\rho_{0}(G_{s}^2)&\leq \sqrt{f(2)}\\
&=\sqrt{(n-k-1)^{2}-((2k - 4)n-3k^{2}+6)}\\
&\leq \sqrt{(n-k-1)^{2}-((2k - 4)(2k+16) - 3k^2 +6)}\\
&=\sqrt{(n-k-1)^{2}-(k^2+24k-58)}\\
                                 &< n-k-1,\\
\end{aligned}
\end{equation}
where the penultimate inequality follows from the fact that $n\geq 2k+16$.
 Since $K_{1} \nabla (K_{n-k-1}\cup kK_{1})$ contains $K_{n-k}\cup k K_{1}$ as a proper spanning  subgraph, by Lemma \ref{lem::2.2}, $\rho_0(K_{1} \nabla (K_{n-k-1} \cup kK_{1}))>\rho_0(K_{n-k}\cup kK_{1})=n-k-1$. Combining this with (\ref{equ::1}), (\ref{equ::2}) and (\ref{equ::4}), we conclude that $\rho_0(G)\leq \rho_0(G_s^1)\leq \rho_0(G_s^2)<\rho_0(K_{1} \nabla (K_{n-k-1}\cup kK_{1}))$.

{\flushleft\bf Case 2.} $a=1.$

According to Lemma \ref{lem::2.4},
\begin{equation}\label{equ::5}
\begin{aligned}
                         \rho_{1}(G_{s}^{2})&\leq\frac{2m(G_{s}^{2})}{n-1}+n-2\\
                                 &=\frac{(n-(k-2)s-2)(n-(k-2)s-3)+2((k-2)s+2)s}{n-1}+n-2\\
                                 &=\frac{(k^{2}-2k)s^{2}+(4n-2kn+5k-6)s+2(n-2)^{2}}{n-1}.
\end{aligned}
\end{equation}
Let $g(s)=(k^{2}-2k)s^{2}-(2kn-5k-4n+6)s+2(n-2)^{2}$. Recall that $2\leq s\leq(n-3)/(k-1)$. A simple calculation yields that
\begin{equation*}
\begin{aligned}
                        &~~~~g(2)-g\Big(\frac{n-3}{k-1}\Big)\\
                        &=\frac{(n-2k-1)((k-2)^{2}n-2k^3+4k^2+k-6)}{(k-1)^{2}}\\
                         &\geq\frac{15(12k^2-55k+58)}{(k-1)^{2}}\\
                                 &>0,
\end{aligned}
\end{equation*}
where the inequalities follow from the fact that $n\geq 2k+16$ and  $k\geq 3$.
This implies that, for $2\leq s\leq(n-3)/(k-1)$, the maximum value of $g(s)$ is attained at $s=2$, and so from (\ref{equ::5}), we obtain
\begin{equation}\label{equ::6}
\begin{aligned}
                        \rho_{1}(G_{s}^{2})&\leq \frac{g(2)}{n-1}\\
                         &=2(n-k-1)-\frac{(2k-4)n-4k^{2}
                                 +6}{n-1}\\
                                 &\leq 2(n-k-1)-\frac{(2k-4)(2k+16)-4k^{2}+6}{n-1}\\
                                 &\leq 2(n-k-1)-\frac{24k-58}{n-1}\\
                                  &< 2(n-k-1),
\end{aligned}
\end{equation}
where the penultimate inequality follows from the fact that $n\geq 2k+16$.
 Since $K_{1} \nabla (K_{n-k-1}\cup kK_{1})$ contains $K_{n-k}\cup kK_{1}$ as a proper spanning  subgraph, by Lemma \ref{lem::2.2}, $\rho_1(K_{1} \nabla (K_{n-k-1}\cup kK_{1}))>\rho_1(K_{n-k}\cup kK_{1})=2(n-k-1)$. Combining this with (\ref{equ::1}), (\ref{equ::2}) and (\ref{equ::6}), we have $\rho_1(G)\leq \rho_1(G_s^1)\leq\rho_1(G_s^2)<\rho_1(K_{1} \nabla (K_{n-k-1}\cup kK_{1}))$.

Concluding the above results, we obtain
$$\rho_a(G)\leq\rho_a(K_{1} \nabla (K_{n-k-1}\cup kK_{1}))$$
for $a\in\{0,1\}$, where equality holds  if and only if $G\cong K_{1} \nabla (K_{n-k-1}\cup kK_{1})$. Let  $u$ denote the unique vertex of degree $n-1$ in $K_{1} \nabla (K_{n-k-1}\cup kK_{1})$. Since $u$ is adjacent to  $k$ pendant vertices, the degree of  $u$ must be at least  $k+1$ in each spanning tree of $K_{1} \nabla (K_{n-k-1}\cup kK_{1})$. This suggests that $K_{1} \nabla (K_{n-k-1}\cup kK_{1})$ contains no $k$-trees, and so the result follows.\end{proof}

\section{Proof of Theorem \ref{thm::1.4} and Theorem \ref{thm::1.5}}

In order to prove Theorems 1.4 and 1.5, we require the following lemma.

\begin{lem} \label{lem::4.1}
Let $a=0$ or $1$. For $1\leq s<n/2$,
$$\rho_{a}(K_{s+1,s}\nabla_{1} K_{n-s-1,n-s})<\rho_{a}(K_{s,s-1}\nabla_{1} K_{n-s,n-s+1}).$$
\end{lem}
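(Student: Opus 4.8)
The plan is to compare the two graphs by analyzing the structure of their vertex classes under the Perron vector of $A_a$. Write $G=K_{s+1,s}\nabla_1 K_{n-s-1,n-s}$ and $G'=K_{s,s-1}\nabla_1 K_{n-s,n-s+1}$. Here the operation $\nabla_1$ adds all edges between the second part of the right-hand bipartite graph and the first part of the left-hand one, so each of $G$ and $G'$ decomposes into four natural vertex classes (the two parts of each factor). By the symmetry of the construction, the Perron vector $x$ of $A_a(G)$ is constant on each of these four classes, and likewise for $G'$. My first step is to set up these four-class partitions explicitly and to verify that they are equitable, so that by Lemma \ref{lem::2.5} the spectral radius $\rho_a$ of each graph equals the largest eigenvalue of the corresponding $4\times 4$ quotient matrix $B_\Pi$.

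The cleanest route is then to exhibit $G'$ as the result of applying the edge-shifting operation of Lemma \ref{lem::2.1} to $G$ (possibly iterated). Concretely, I would try to realize the passage from $(s+1,s)$ and $(n-s-1,n-s)$ to $(s,s-1)$ and $(n-s,n-s+1)$ as moving one vertex from each part of the left factor into the matching right factor, which amounts to transferring the neighborhoods of certain vertices from a lower-weight endpoint to a higher-weight one. To do this I must compare the Perron entries across the four classes: using the eigenvalue equations $A_a(G)x=\rho_a(G)x$ restricted to each class, I would derive the ordering of $x_1,x_2,x_3,x_4$ (for instance showing the entries on the classes that gain edges dominate those that lose them), which is exactly the hypothesis $x_u\ge x_v$ needed to invoke Lemma \ref{lem::2.1} and conclude the strict inequality $\rho_a(G)<\rho_a(G')$.

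If the edge-shifting realization turns out to be awkward to arrange in one clean move, the fallback is a direct quotient-matrix computation: form the two $4\times 4$ matrices $B_\Pi(G)$ and $B_\Pi(G')$, and compare their largest eigenvalues. Since both matrices are nonnegative and irreducible, I would use the Rayleigh-quotient / Perron-vector trick as in Lemma \ref{lem::3.1}, estimating $\rho_a(G')-\rho_a(G)\ge x^{T}\bigl(A_a(G')-A_a(G)\bigr)x$ for the Perron vector $x$ of $G$, and showing the right-hand side is positive. The difference $A_a(G')-A_a(G)$ records exactly which edges are added and which are removed (together with the diagonal degree changes weighted by $a$), so this reduces to checking that the net contribution of the gained edges, governed by the class entries with larger Perron weight, exceeds that of the lost edges.

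The main obstacle I anticipate is establishing the correct sign of the entry comparison across the four classes uniformly in the parameters, i.e. confirming that under the range $1\le s<n/2$ the Perron entries really do order in the direction that makes the edge transfer increase the spectral radius; the two factors have nearly balanced sizes, so the weight gaps between adjacent classes are small and must be controlled carefully. Handling the two cases $a=0$ and $a=1$ together via the unified matrix $A_a(G)$ should cause no extra difficulty, since the degree-difference terms carry the nonnegative factor $a$ and only reinforce the inequality, exactly as in the final display of the proof of Lemma \ref{lem::3.1}.
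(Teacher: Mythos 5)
Your opening step (the four-class equitable partition and Lemma \ref{lem::2.5}) matches the paper, but both of your proposed comparison arguments have a genuine gap, and the obstacle you flagged at the end is fatal rather than technical. Write $G=K_{s+1,s}\nabla_{1}K_{n-s-1,n-s}$ and $G'=K_{s,s-1}\nabla_{1}K_{n-s,n-s+1}$, with classes $X_1,Y_1,X_2,Y_2$ of sizes $s+1,s,n-s-1,n-s$. The natural identification realizes $G'$ as $G$ \emph{plus} the $n-s$ edges from one vertex $u\in X_1$ to all of $Y_2$, \emph{minus} the $s$ edges from one vertex $w\in Y_1$ to $X_1\setminus\{u\}$. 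This is not a Lemma \ref{lem::2.1} move: the deleted and added edges are incident to different vertices and their other endpoints are disjoint sets; moreover $|E(G')|-|E(G)|=n-2s>0$ while edge-shifting preserves the edge count, so no sequence of shifts alone can take $G$ to $G'$. The Rayleigh fallback also fails. For $a=0$, with Perron values $x_1,y_1,x_2,y_2$ on the four classes, the natural identification gives
$$x^{T}\bigl(A(G')-A(G)\bigr)x=2x_1\bigl[(n-s)y_2-s\,y_1\bigr],$$
and this can be negative inside the range $1\le s<n/2$: when $n=2s+1$ the graph $G$ admits the automorphism $(X_1,Y_1,X_2,Y_2)\mapsto(Y_2,X_2,Y_1,X_1)$, forcing $x_1=y_2$ and $y_1=x_2$, and then $(n-s)y_2-sy_1=(s+1-\rho(G))x_1<0$ for $s\ge 2$ because $\rho(G)\ge\sqrt{n(n-s-1)}=\sqrt{s(2s+1)}>s+1$. (Concretely, $n=5$, $s=2$: $\rho(G)=1+\sqrt{7}\approx 3.65$ and $3y_2-2y_1\approx-0.65\,x_1<0$, yet $\rho(G')\approx 3.93>\rho(G)$, so the lemma holds while your bound gives nothing.) The reason is structural: the gain in spectral radius comes from the vertex $u$ --- which lies in the class of \emph{smallest} Perron weight --- becoming a dominating vertex, a global effect invisible to first-order perturbation by $G$'s Perron vector. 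This is exactly why the Rayleigh argument of Lemma \ref{lem::3.1}, where edges are consolidated onto the class of largest Perron weight, does not transfer to this bipartite setting.

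What the paper actually does after the equitable-partition step is a characteristic-polynomial comparison, not a Perron-vector perturbation: it computes $\varphi(B_{\Pi}^{s},x)$ for the $4\times 4$ quotient matrix of $A_a(G)$, observes that the quotient matrix of $A_a(G')$ is $B_{\Pi}^{s-1}$ (the same matrix with $s$ replaced by $s-1$), and verifies the identity
$$\varphi(B_{\Pi}^s,x)-\varphi(B_{\Pi}^{s-1},x)=(n-2s)\bigl[(a^2+1)x^2-2a^{3}nx-(1-a^2)(2s^2 -2ns + a^2n^2)\bigr],$$
whose right-hand side is positive for all $x$ when $a=0$, and positive for all $x\ge n$ when $a=1$ (using $\rho_{1}(G')>\rho_{1}(K_{1,n-1})=n$). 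Hence $\varphi(B_{\Pi}^{s},x)>0$ for every $x\ge\lambda_{1}(B_{\Pi}^{s-1})$, which forces $\lambda_{1}(B_{\Pi}^{s})<\lambda_{1}(B_{\Pi}^{s-1})$, and Lemma \ref{lem::2.5} converts this into the claimed inequality for $\rho_a$. If you insist on a Perron-vector argument, you would need a non-obvious vertex identification (for instance, matching the $n-1$ degree-$n$ vertices of $G'$ to the $n-1$ degree-$n$ vertices of $G$, which mixes the two sides of the bipartition), and you would still face a positivity verification at least as involved as the polynomial identity above.
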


\renewcommand\proofname{\bf Proof}
\begin{proof}
Observe that $A_{a}(K_{s+1,s}\nabla_{1} K_{n-s-1,n-s})$ has the equitable quotient matrix
$$
B_\Pi^s=\begin{bmatrix}
as &0 & s& 0\\
0&an &s &n-s\\
  s+1&n-s-1& an& 0\\
   0 &n-s-1 &0 &a(n-s-1)
\end{bmatrix}.
$$
By a simple calculation, the characteristic polynomial of $B_\Pi^s$ is
\begin{equation*}
\begin{aligned}
\varphi(B_{\Pi}^s,x)&=x^4-a(3n-1)x^3-[(a^2 + 1)s^2 - (a^2 + 1)(n - 1)s + (1-3a^2)n^2 - (1 - 2a^2)n]x^{2}\\
&~~~+an[2a^2s^2 - 2a^2(n-1)s + (1 - a^2)(n^2-n)]x\\
&~~~+s(n-s-1)(a^{2}-1)(a^2n^2-ns+s^2-n+s).
\end{aligned}
\end{equation*}
Also, note that $A_{a}(K_{s,s-1}\nabla_{1} K_{n-s,n-s+1})$ has the equitable quotient matrix $B_{\Pi}^{s-1}$, which is obtained by replacing $s$ with $s-1$ in $B_\Pi^s$. Then
\begin{equation}\label{equ::7}
\varphi(B_{\Pi}^s,x)-\varphi(B_{\Pi}^{s-1},x)=(n-2s)[(a^2+1)x^2-2a^{3}nx-(1-a^2)(2s^2 -2ns + a^2n^2)].
\end{equation}
Recall that $s<n/2$. For $a=0$, from (\ref{equ::7}), we obtain
$$\varphi(B_{\Pi}^s,x)-\varphi(B_{\Pi}^{s-1},x)=(n-2s)(x^2+2ns-2s^2)>0,$$
which leads to
$\lambda_{1}(B_{\Pi}^s)<\lambda_{1}(B_{\Pi}^{s-1})$. By Lemma \ref{lem::2.5},
$$\rho_{0}(K_{s+1,s}\nabla_{1} K_{n-s-1,n-s})<\rho_{0}(K_{s,s-1}\nabla_{1} K_{n-s,n-s+1}).$$
For $a=1$, since $K_{s,s-1}\nabla_{1}K_{n-s,n-s+1}$ contains  $K_{1,n-1}$ as a proper subgraph, we have  $\rho_{1}(K_{s,s-1}\nabla_{1} K_{n-s,n-s+1})>\rho_{1}(K_{1,n-1})=n$. Thus, by (\ref{equ::7}),
$$\varphi(B_{\Pi}^s,x)-\varphi(B_{\Pi}^{s-1},x)=2(n-2s)x(x-n)>0$$
for $x\geq \rho_{1}(K_{s,s-1}\nabla_{1} K_{n-s,n-s+1})$, which implies that
$$\rho_{1}(K_{s+1,s}\nabla_{1} K_{n-s-1,n-s})<\rho_{1}(K_{s,s-1}\nabla_{1} K_{n-s,n-s+1}).$$\end{proof}

By using Lemma \ref{lem::4.1}, we now give a short proof of Theorem \ref{thm::1.4}.

\renewcommand\proofname{\bf Proof of Theorem \ref{thm::1.4}}
\begin{proof}

Assume that $G$ is a balanced bipartite graph of order $2n$ with minimum degree $\delta$ having no perfecting matchings. By Lemma \ref{lem::1.2}, there exists some  nonempty $X_{1}\subseteq X$ such that  $|N(X_{1})|<|X_1|$. It follows that $G$ is a spanning subgraph of $K_{s+1,s}\nabla_{1}K_{n-s-1,n-s}$ for some $s$ with $\delta \leq s< n/2$. Then, from Lemmas \ref{lem::2.2} and \ref{lem::4.1},
\begin{equation*}
\begin{aligned}
\rho_{a}(G)&\leq\rho_{a}(K_{s+1,s}\nabla_{1} K_{n-s-1,n-s})\\
&\leq\rho_{a}(K_{\delta+1,\delta}\nabla_{1} K_{n-\delta-1,n-\delta}),
\end{aligned}
\end{equation*}
 where the first equality holds if and only if $G\cong K_{s+1,s}\nabla_{1}K_{n-s-1,n-s}$, and the second  holds if and only if $s=\delta$. Also, $K_{\delta+1,\delta}\nabla_{1} K_{n-\delta-1,n-\delta}$ contains no perfect matchings, and so the result follows.
\end{proof}

To prove Theorem \ref{thm::1.5}, we need the following lemmas.
\begin{lem}\label{lem::4.2}
Let $\delta \geq 1$ and $n\geq\frac{1}{2}\delta^{3}+\frac{1}{2}\delta^{2}+\delta+4$. If $G$ is a spanning subgraph of  $K_{\delta+1,\delta}\nabla_{1} K_{n-\delta-1,n-\delta}$ with minimum degree $\delta$, then $\rho(G)<\sqrt{n(n-\delta-1)}$ unless $G=K_{\delta+1,\delta}\nabla_{1} K_{n-\delta-1,n-\delta}$.
\end{lem}

\renewcommand\proofname{\bf Proof}
\begin{proof}
By Lemma \ref{lem::2.2}, we note that  $\rho(K_{\delta+1,\delta}\nabla_{1} K_{n-\delta-1,n-\delta})>\rho(K_{n-\delta-1,n}\cup (\delta+1)K_1)=\sqrt{n(n-\delta-1)}$. Suppose that $G$ is a proper spanning subgraph of $K_{\delta+1,\delta}\nabla_{1} K_{n-\delta-1,n-\delta}$ with minimum degree $\delta$. Then $G$ is exactly a spanning subgraph of the graph $H$ obtained by removing some edge $uv$ from $K_{\delta+1,\delta}\nabla_{1} K_{n-\delta-1,n-\delta}$, and by Lemma \ref{lem::2.2}, $\rho(G)\leq\rho(H)$. Thus it suffices to show that $\rho(H)<\sqrt{n(n-\delta-1)}$.

In the graph $K_{\delta+1,\delta}\nabla_{1} K_{n-\delta-1,n-\delta}$, let $X_{1}$ denote the set of the $\delta+1$ vertices with  degree $\delta$, $Y_{1}$ the set of vertices adjacent to $X_{1}$, $X_{2}$ the set of vertices of degree $n$, and $Y_{2}$ is the set of the remaining $n-\delta$ vertices.  Since the minimum degree of $H$ is  $\delta$, we only need to consider the cases that $u\in Y_1$ and $v\in X_2$ or $u\in X_2$ and $v\in Y_2$. For the former case, we denote  $H$ by $H_1$, and for the later, we denote $H$ by $H_2$.

Let $x$ be the Perron vector of $A(H_{1})$ with respect to $\rho(H_{1})$. We choose a vertex $w\in Y_{2}$, and let $H'$ be the graph obtained from $H_{1}$ by removing the edge $vw$ and adding the edge $uv$. Then we see that  $H'\cong H_{2}$. If $x(u)\geq x(w)$, then by Lemma \ref{lem::2.1},  $\rho(H_2)=\rho(H')>\rho(H_{1})$.
If $x(w)> x(u)$, we construct a new vector $x'$ which is obtained from $x$ by swapping the entries $x(u)$ and $x(w)$. As $x'$ is also a positive unit vector, by the Reyleigh quotient,
\begin{equation*}
\begin{aligned}
                        \rho(H_2)-\rho(H_{1}) &=\rho(H')-\rho(H_{1}) \\
                        &\geq x'^{T}A(H')x'-x^{T}A(H_{1})x\\
                                 &=2(x(w)-x(u))\sum_{v_1\in X_{1}}x(v_1)\\
                                 & >0.
\end{aligned}
\end{equation*}
Therefore $\rho(H_2)>\rho(H_1)$ and  it remains to prove that $\rho(H_2)<\sqrt{n(n-\delta-1)}$.

Assume to the contrary that $\rho(H_2)\geq\sqrt{n(n-\delta-1)}$. Let $y$ be the Perron vector of $A(H_{2})$ with respect to $\rho(H_{2})$, and let $\rho=\rho(H_{2})$. By symmetry,  $y$ takes the same values  (say $e$, $b$, $c$ and $d$) on the vertices of $X_1$, $X_2\setminus\{u\}$, $Y_1$ and $Y_2\setminus\{v\}$, respectively. Let $y(u)=b_{1}$ and $y(v)=d_{1}$. Then, from $A(H_{2})y=\rho y$, we obtain
$$
\begin{aligned}
   \rho e&=\delta c, \\
    \rho b&=\delta c+(n-\delta-1)d+d_{1},  \\
    \rho c&=(\delta+1)e+(n-\delta-2)b+b_{1},\\
    \rho d&=(n-\delta-2)b+b_{1},\\
    \rho b_{1}&=\delta c+(n-\delta-1)d,~\mbox{and}\\
    \rho d_{1}&=(n-\delta-2)b.
  \end{aligned}
$$
Thus
\begin{equation}\label{equ::8}
\begin{aligned}
                   e&=\frac{\delta c}{\rho}, \\
                  b_{1}&=\frac{(n-\delta-1)(\rho^{2}-\delta(\delta+1))+\delta\rho^{2}}{\rho^{3}}c,~\mbox{and}\\
                   d_{1}&=\frac{(\rho^{2}-\delta(\delta+1))(\rho^{2}-(n-\delta-1))-\delta\rho^{2}}{\rho^{4}}c.
  \end{aligned}
\end{equation}
Let $y'$ be the vector obtained from $y$ by  restricting to the vertex set of the subgraph $K_{n-\delta-1,n}$ of $K_{\delta+1,\delta}\nabla_{1} K_{n-\delta-1,n-\delta}$. We have
\begin{equation}\label{equ::9}
\begin{aligned}
y'^{T}A(K_{n-\delta-1,n})y'&=y^{T}A(H_{2})y+2b_{1}d_{1}-2\delta (\delta+1) ec=\rho+2b_{1}d_{1}-2\delta(\delta+1)ec\\
&\leq \rho(K_{n-\delta-1,n})(1-(\delta+1)e^{2})=\sqrt{n(n-\delta-1)}(1-(\delta+1)e^{2}).
\end{aligned}
\end{equation}
Recall that $\rho\geq\sqrt{n(n-\delta-1)}$. Then, from (\ref{equ::8}) and (\ref{equ::9}), we can deduce that
\begin{equation*}
\begin{aligned}
                      \delta^{2}(\delta+1)&\geq \frac{2\rho b_{1}d_{1}}{c^{2}}\\
                                 &=2\frac{(n-\delta-1)(\rho^{2}-\delta(\delta+1))+\delta\rho^{2}}{\rho^{2}}\cdot\frac{(\rho^{2}-\delta(\delta+1))
                                 (\rho^{2}-(n-\delta-1))-\delta\rho^{2}}{\rho^{4}}\\
                                 &\geq2(n-\delta-1)\left[\frac{(\rho^{2}-\delta(\delta+1))^{2}(\rho^{2}-(n-\delta-1))}{\rho^{6}}-\frac{\delta(\rho^{2}-\delta(\delta+1))
                             }{\rho^{4}}\right]\\
                              &=2(n-\delta-1)\left[(1-\frac{\delta(\delta+1)}{\rho^{2}})^{2}(1-\frac{n-\delta-1}{\rho^{2}})-\frac{\delta}{\rho^{2}}
                              (1-\frac{\delta(\delta+1)}{\rho^{2}})\right]\\
                               &>2(n-\delta-1)\left(1-\frac{n-\delta-1}{\rho^{2}}-\frac{2\delta(\delta+1)}{\rho^{2}}-\frac{\delta}{\rho^{2}}\right)\\
                               &\geq2(n-\delta-1)\left(1-\frac{n+2\delta^{2}+2\delta-1}{n(n-\delta-1)}\right)\\
                               &=2(n-\delta-1)-2-\frac{4\delta^{2}+4\delta-2}{n}\\
                               &>2(n-\delta-1)-6\\
                               &\geq  \delta^{2}(\delta+1),
\end{aligned}
\end{equation*}
where the last two inequalities follow from the fact that $n\geq\frac{1}{2}\delta^{3}+\frac{1}{2}\delta^{2}+\delta+4$. This is a contradiction, and so the result follows.
\end{proof}

\begin{lem}(Nosal \cite{Nosal})\label{lem::4.3}
Let $G$ be a bipartite graph. Then
                  $$\rho(G)\leq \sqrt{e(G)}.$$
\end{lem}

\renewcommand\proofname{\bf Proof of Theorem \ref{thm::1.5}}
\begin{proof}
Assume that $G$ is a balanced bipartite graph of order $2n$ with minimum degree $\delta$ having no perfecting matchings, by Lemma \ref{lem::1.2}, there exists some nonempty $S\subseteq X$ such that $|N(S)|<|S|$. It is easy to verify that $G$ is a spanning subgraph of $K_{s+1,s}\nabla_{1}K_{n-s-1,n-s}$ for some $s$ with $\delta \leq s< n/2$. If $s=\delta$, then by Lemma \ref{lem::4.2} and Theorem \ref{thm::1.4}, we obtain the result immediately. Thus, we consider $s\geq\delta+1$ in the following. Since the minimum degree of $G$ is $\delta$, there exists a vertex $v$ with $d_{G}(v)=\delta$. This implies that $G$ is a proper spanning subgraph of  $K_{s+1,s}\nabla_{1} K_{n-s-1,n-s}$. In the graph $K_{s+1,s}\nabla_{1} K_{n-s-1,n-s}$, let $X_{1}$ denote the set of the $s+1$ vertices with degree $s$, $Y_{1}$ the set of vertices adjacent to $X_{1}$, $X_{2}$ the set of vertices of degree $n$, and $Y_{2}$ is the set of the remaining $n-s$ vertices. If $v\in X_1$, then
\begin{equation*}
\begin{aligned}
e(G)&\leq s(s+1)+(n-s-1)n-(s-\delta)\\
     &= n(n-\delta-1)-(-s^2+sn-(n+1)\delta)\\
     &\leq n(n-\delta-1)-(n-\delta^2-3\delta-1)~(\mbox{since $\delta+1\leq s< n/2$})\\
     &\leq n(n-\delta-1)-\left(\frac{\delta^3}{2}-\frac{\delta^2}{2}-2\delta+3\right) ~(\mbox{since $n\geq\frac{1}{2}\delta^{3}+\frac{1}{2}\delta^{2}+\delta+4$})\\
     &< n(n-\delta-1) ~(\mbox{since $\delta\geq 1$}).
\end{aligned}
\end{equation*}
Combining this with Lemma \ref{lem::4.3}, we have $\rho(G)<\sqrt{n(n-\delta-1)}$. For the remaining three cases that $v\in X_2$ or $v\in Y_i$ where $i=1,2$, we can also deduce $\rho(G)<\sqrt{n(n-\delta-1)}$ by using similar analysis as above.

This completes the proof.
\end{proof}

\section*{Acknowledgements}

The authors would like to thank the editor and the two anonymous referees for their valuable comments and helpful suggestions. H. Lin is supported by the National Natural Science Foundation of China (Grant Nos. 12011530064 and 11771141). S. Goryainov is supported by RFBR according to the research project 20-51-53023. X. Huang is supported by the National Natural Science Foundation of China (Grant No. 11901540).

\end{document}